\documentclass[11pt]{amsart}

\usepackage{amsmath,amssymb,latexsym,soul,cite,mathrsfs}

\usepackage{color,enumitem,graphicx}
\usepackage[colorlinks=true,urlcolor=blue,
citecolor=red,linkcolor=blue,linktocpage,pdfpagelabels,
bookmarksnumbered,bookmarksopen]{hyperref}
\usepackage[english]{babel}

\usepackage[left=2.9cm,right=2.9cm,top=2.8cm,bottom=2.8cm]{geometry}
\usepackage[hyperpageref]{backref}

\usepackage[colorinlistoftodos]{todonotes}
\makeatletter
\providecommand\@dotsep{5}
\def\listtodoname{List of Todos}
\def\listoftodos{\@starttoc{tdo}\listtodoname}
\makeatother

\numberwithin{equation}{section}





\newtheorem{theorem}{Theorem}[section]
\newtheorem{proposition}[theorem]{Proposition}
\newtheorem{lemma}[theorem]{Lemma}
\newtheorem{corollary}[theorem]{Corollary}

\newtheorem{definition}{Definition}[section]

\pretolerance10000

\begin{document}

\title[ Stability of solutions for a parabolic problem involving fractional $p$-Laplacian...]
{Stability of solutions for a parabolic problem involving fractional $p$-Laplacian with logarithmic nonlinearity}
\author{Tahir Boudjeriou}
\address[Tahir Boudjeriou]
{\newline\indent Department of Mathematics
	\newline\indent Faculty of Exact Sciences
	\newline\indent Lab. of Applied Mathematics
	\newline\indent University of Bejaia, Bejaia, 6000, Algeria 
	\newline\indent
	e-mail:re.tahar@yahoo.com}

\pretolerance10000


\begin{abstract}
	\noindent In this paper, we study the following Dirichlet problem for a parabolic equation involving fractional $p$-Laplacian with logarithmic nonlinearity 
	\begin{equation*}\label{eq}\left\{
	\begin{array}{llc}
	u_{t}+(-\Delta)^{s}_{p}u+|u|^{p-2}u=|u|^{p-2}u\log(|u|) & \text{in}\ & \Omega,\;t>0 , \\
	u =0 & \text{in} & \mathbb{R}^{N}\backslash \Omega,\;t > 0, \\
	u(x,0)=u_{0}(x), & \text{in} &\Omega , 
	\end{array}\right.
	\end{equation*}
where $\Omega \subset \mathbb{R}^N \, ( N\geq 1)$ is a bounded domain with Lipschitz boundary and $2\leq p< \infty$. The local existence will be done by using the Galerkin approximations. By combining the potential well theory with the Nehari manifold we establish the existence of global solutions. Then, by virtue of a differential inequality technique, we prove that the local solutions blow-up in finite time with arbitrary negative initial energy and suitable initial values. Moreover, we give decay estimates of global solutions. The main difficulty here is the lack of logarithmic Sobolev inequality concerning fractional $p$-Laplacian.
 \end{abstract}
\thanks{}
\subjclass[2010]{35K59, 35K55, 35B40.} 
\keywords{Fractional $p$-Laplacian, global existence, blow-up, potential well.}
\maketitle	
\section{Introduction and the main results} 
This paper is concerned with the following parabolic equation involving fractional $p$-Laplacian with logarithmic nonlinearity 
\begin{equation}\label{eq1}\left\{
\begin{array}{llc}
u_{t}+(-\Delta)^{s}_{p}u+|u|^{p-2}u=|u|^{p-2}u\log(|u|) & \text{in}\ & \Omega,\;t>0 , \\
u =0 & \text{in} & \mathbb{R}^{N}\backslash \Omega,\;t > 0, \\
u(x,0)=u_{0}(x), & \text{in} &\Omega , 
\end{array}\right.
\end{equation}
where $\Omega \subset \mathbb{R}^N \, ( N\geq 1)$ is a bounded domain with Lipschitz boundary $\partial \Omega$, $u_{0}\neq 0$ is the initial function on $\Omega$, and $(-\Delta)_{p}^{s}$ is the fractional $p$-Laplacian which is nonlinear nonlocal operator defined on smooth functions by 
\begin{eqnarray}
(-\Delta)_{p}^{s}\varphi(x) &=& 2\lim_{\epsilon \downarrow 0} \int_{\mathbb{R}^{N}\backslash B_{\epsilon}(x)}\frac{|\varphi(x)-\varphi(y)|^{p-2}(\varphi(x)-\varphi(y))}{|x-y|^{N+sp}}\,dy.
\end{eqnarray}
This definition is consistent, up to a normalization constant depending on $N$ and $s$. We refer the reader to (\cite{DV}, \cite{Cl}) and the references therein for further details on the fractional Laplacian and on the fractional Sobolev spaces. Throughout the paper, without further mentioning, we always assume that $s\in (0,1)$, $N>sp$ and $2\leq p<\infty.$  

The interest in studying problems like $\eqref{eq1}$ relies not only on mathematical purposes but also on their significance in real models, as explained by \textit{Caffarelli} in \cite{L.C} and  \textit{G. Gilboa et al} in \cite{Gi}. \textit{Applebaum} in \cite{J.B} stated that the fractional $2-$Laplacian operator of the form $(-\Delta)^{s}$, $0<s<1$, is an infinitesimal generator of stable L\'evy processes.

In recent years, the global existence and blow-up of solutions to the following model 
$$
\left\{\begin{array}{l}
u_{t}-\Delta u=f(u), \;\; x\in \Omega, \; t>0, \\
u(x,t)=0, \;\;\;\; x\in \partial \Omega, \;t> 0, \\
u(x,0)=u_{0}(x), \;\;\; x\in \Omega
\end{array}\right.\leqno{(M_{1})}
$$
has been studied extensively by many authors, see for example  (\cite{LI}, \cite{Py}, \cite{Tan}, \cite{BL}, \cite{CT}) and the references therein where the authors have assumed the following conditions on the nonlinearity $f(u)$ :
\begin{enumerate}
	\item $f\in C^{1}(\mathbb{R})$ and $f(0)=f'(0)=0.$
	\item $f$ is monotone and convex for $u>0$, concave for $u<0$.
	\item $(p+1)F(s)< sf(s)$, and $|sf(s)|\leq \mu F(s)$, where $2<p<\mu< \frac{2N}{N-2}, \; F(s)=\int^{s}_{0}f(r)\,dr.$ 
\end{enumerate}
Recently, the logarithmic heat equation given by 
$$ v_{t}=\Delta v+|v|^{p-2}\log|v|, \;\; v :\mathbb{R}^{N}\times [0,+\infty)\rightarrow \mathbb{R},\; p,N\geq 2.$$
has also received special attention because it appears in a lot of physical applications, such as nuclear physics, transport and diffusion phenomena, theory of superfluidity (see \cite{ZHH} and the references therein). It is worth noting that when $f(u)$ is a logarithmic nonlinear function, i.e, $f(u)=|u|^{p-2}\log|u|$, the above assumptions  $(1)-(3)$ are not satisfied. So it is difficult to deal with this type of nonlinearity. For parabolic equations involving classical Laplacian with logarithmic nonlinearity, we refer to (\cite{Con}, \cite{Ch}, \cite{Chh2}) where the authors have used the Sobolev logarithmic inequality \cite{Del} to study the global existence and blow-up of solutions, as well as they, compared the difference between logarithmic nonlinearity and polynomial nonlinearity. 

 It is worth mentioning that by using variational methods many authors have studied the stationary case of problem $\eqref{eq1}$ in unbounded domains where $p\geq 2$ and $0<s \leq 1$, in that direction we would like to mention  ( \cite{W.Ch},\cite{T}, \cite{T2}, \cite{CO}, \cite{AD}, \cite{Squa})  for the interested reader.
 
We point out that in the last years many authors have obtained important results on the fractional $p-$Laplacian in bounded or unbounded domains with polynomial type nonlinearities, for example see  (\cite{Jac}, \cite{BB}, \cite{Wr}, \cite{Wrr},\cite{Jun}) and the references therein. With the help of potential well theory, \textit{Fu} and \textit{Pucci} \cite{FU}, studied the existence of global weak solutions and established the vacuum isolating and blow-up of strong solutions for the following class of problem 
$$
\left\{\begin{array}{l}
u_{t}+ (-\Delta)^{s} u=|u|^{p-2}u, \;\; x\in \Omega, \; t>0, \\
u(x,t)=0, \;\;\;\; x\in \mathbb{R}^{N}\backslash \Omega, \;t> 0, \\
u(x,0)=u_{0}(x), \;\;\; x\in \Omega
\end{array}\right.\leqno{(M_{2})}
$$
for $s\in (0,1)$, $N>2s$ and $2< p\leq 2_{s}^{*}=2N/(N-2s).$  In (\cite{Hng}, \cite{MII}, \cite{BZ}) by using the Galerkin method combined with the potential well theory, the authors have studied  the existence of global weak solutions for the degenerate Kirchhoff-type diffusion problems involving fractional Laplacian. Moreover, they obtained also estimates for the lower and upper bounds of the blow-up time. In \cite{JMM}, by using the sub-differential approach, \textit{Maz\'on et al} obtained the well-posedness of solutions for problem $\eqref{eq1}$ with $f(x,t)$ instead of $|u|^{p-2}\log(|u|)$. Moreover, the large-time behavior of solutions also are considered.\par  

Motivated by the above works, in the pressent manscript by using the potential well theory combined with the Nehari manifold we dicuss the global existence and finite time blow-up for the solutions of problem $\eqref{eq1}$. To the best of our knowledge, this is the first result in the literature to investigate the global existence and blow-up of solutions in the study of diffusion fractional $p-$Laplacian with logarithmic nonlinearity.

Hereafter, we denote by $X_{0}=W_{0}^{s,p}(\Omega)\backslash \{0\}$
which will be introduced in section $2$. The energy functional $E :X_{0}\rightarrow \mathbb{R}$ associated with problem $\eqref{eq1}$ is given by 
\begin{equation}\label{d2}
E(u(t))=\frac{1}{p}\int_{\mathbb{R}^{N}}\int_{\mathbb{R}^{N}}\frac{|u(x,t)-u(y,t)|^{p}}{|x-y|^{N+sp}}\,dydx+\frac{1}{p}\int_{\Omega} |u(t)|^{p}\,dx-\frac {1}{p}\int_{\Omega} |u(t)|^{p}\log(|u(t)|)\,dx+\frac{1}{p^{2}}\int_{\Omega} |u(t)|^{p}\,dx,
\end{equation}
We define the Nehari functional $I : X_{0}\rightarrow \mathbb{R}$ by 
\begin{equation}\label{d3}
I(u(t))=\int_{\mathbb{R}^{N}}\int_{\mathbb{R}^{N}}\frac{|u(x,t)-u(y,t)|^{p}}{|x-y|^{N+sp}}\,dydx+\int_{\Omega} |u(t)|^{p}\,dx-\int_{\Omega} |u(t)|^{p}\log(|u(t)|)\,dx.
\end{equation}
The potential well associated with problem $\eqref{eq1}$ is the set
\begin{equation}\label{pt7}
\mathcal{W} =\{u\in X_{0}\,:\, E(u)< d,\; I(u)>0 \}.
\end{equation}
where $d$ is the depth of the potential well. The exterior of the potential well is the set 
\begin{equation}\label{ZX}
Z =\{u\in X_{0}\,:\, E(u)< d,\; I(u)<0 \}.
\end{equation}
Related to the functional $E$, we have the well-known {\it Nehari manifold.}
$$\mathcal{N}=\{u\in X_{0}:\; I(u)=0\}.$$ We define 
\begin{equation}\label{V1}
d=\inf_{u\in \mathcal{N}} E(u).
\end{equation}
It is worthwhile to mention that the potential well method was introduced in (\cite{DHH}) to obtain the global existence for nonlinear hyperbolic equations. The most important and typical work on the potential well is due to Payne and Sattinger in \cite {Py} where the authors have studied the initial boundary value problem of semilinear hyperbolic equations and semilinear parabolic equations.
We refer the reader to \cite{I}, where the potential well method was extended to obtain global existence and nonexistence results for the parabolic equations. To state the main results, we need the following two definitions.
\begin{definition}(Weak solution )\label{de1} $u=u(x,t)$ is called a weak solution of problem $\eqref{eq1}$ in $\Omega \times (0,T_{*})$, if 
	$u\in L^{\infty}(0,T_{*}, W_{0}^{s, p}(\Omega))$ with $u_{t}\in L^{2}(0,T_{*},L^{2}(\Omega))$ and satisfies the problem $\eqref{eq1}$ in the distribution sense, i.e 
	\begin{equation}\label{DEF}
	\int_{\Omega}u_{t}v\,dx+K^{s,p}(u,v)+\int_{\Omega} |u|^{p-2}uv\,dx=\int_{\Omega} |u|^{p-2}u\log(|u|)v\,dx,\;\; \forall v\in W_{0}^{s,p}(\Omega), \; \text{a.e.}\, t\in (0,T_{*}),
	\end{equation}
	where $u(0,x)=u_{0}(x)\in X_{0},$ where $K^{s, p}(u,v)$ will be introduced in section $2$.
\end{definition}
\begin{definition}(Maximal existence time)\label{de2} Let $u(t)$ be a weak solution of problem $\eqref{eq1}$. We define the maximal existence time $T_{\max}$ of $u$ as follows : 
	\begin{enumerate}
		\item If $u$ exists for all $ 0\leq t< +\infty$, then $T_{\max}=+\infty$; 
		\item If there exits a $t_{0}\in (0, +\infty)$ such that $u$ exits for $0\leq t< t_{0}$, but does not exist at $t=t_{0}$, then $T_{\max}=t_{0}$.
	\end{enumerate}
\end{definition}
Based on the above preparations, the main results of this paper are the following theorems.
\begin{theorem}\label{th1}
	Let $u_{0}\in X_{0}$. Then there exists a postive constant $T_{*}$ such that the problem $\eqref{eq1}$ has weak solution $u(x,t)$ on $\Omega \times(0,T_{*})$ in the sense of definition \ref{de1}. Furthermore, $u(x,t)$ satisfies the energy inequality 
	\begin{equation}\label{en1}
	\int_{0}^{t}\|u_{s}(s)\|_{2}^{2}\,ds+E(u(t))\leq E(u_{0}),\;\;\text{a.e.}\, t\in [0,T_{*}].
	\end{equation}
\end{theorem}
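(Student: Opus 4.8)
The plan is to construct the weak solution by the Faedo--Galerkin method, using as basis the eigenfunctions $\{w_j\}_{j\ge 1}$ of $(-\Delta)^s_p$ (or, more simply, a Schauder/Hilbert basis of $W^{s,p}_0(\Omega)$ that is orthonormal in $L^2(\Omega)$). First I would fix $m\in\N$, seek an approximate solution $u_m(t)=\sum_{j=1}^m g_{jm}(t)\,w_j$, and impose the finite-dimensional system obtained by testing \eqref{eq1} against each $w_j$: $\int_\Omega u_m' w_j\,dx + K^{s,p}(u_m,w_j)+\int_\Omega |u_m|^{p-2}u_m w_j\,dx = \int_\Omega |u_m|^{p-2}u_m\log(|u_m|)\,w_j\,dx$, with $u_m(0)$ the $L^2$-projection of $u_0$ onto $\mathrm{span}\{w_1,\dots,w_m\}$. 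Since the right-hand side is continuous in the $g_{jm}$'s (the map $r\mapsto |r|^{p-2}r\log|r|$ is continuous, with at most polynomial-times-log growth, so Nemytskii-type continuity on finite-dimensional spaces is immediate), the Cauchy--Peano theorem gives a local $C^1$ solution on some interval $[0,t_m)$.

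Next I would derive the a priori estimates. Multiplying the $j$-th equation by $g_{jm}'(t)$ and summing yields the energy identity $\int_0^t\|u_m'(\tau)\|_2^2\,d\tau + E(u_m(t)) = E(u_m(0))$. Because $u_m(0)\to u_0$ in $W^{s,p}_0(\Omega)$ (by a density/continuity argument on the projections) we have $E(u_m(0))\to E(u_0)$, so $E(u_m(0))$ is bounded. To extract boundedness of $\|u_m(t)\|_{W^{s,p}_0}$ from $E(u_m(t))\le C$ I must control the logarithmic term: using the elementary bound $|u|^p\log|u|\le \tfrac1{e\varepsilon}|u|^{p+\varepsilon}$ for $|u|\ge 1$ (and $|u|^p\log|u|\le 0$ for $|u|\le 1$), together with the fractional Sobolev embedding $W^{s,p}_0(\Omega)\hookrightarrow L^{p+\varepsilon}(\Omega)$ for $\varepsilon$ small (valid since $N>sp$), one absorbs the log term and obtains, for a suitable choice of $\varepsilon$, a bound of the form $\tfrac{c}{2}[u_m]_{s,p}^p\le E(u_m(t))+C\le C'$ on the maximal interval. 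This simultaneously shows $t_m$ can be taken to be any fixed $T_*$ (no finite-time blow-up of the ODE), gives $\{u_m\}$ bounded in $L^\infty(0,T_*;W^{s,p}_0(\Omega))$, and, from the energy identity again, $\{u_m'\}$ bounded in $L^2(0,T_*;L^2(\Omega))$.

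Then I would pass to the limit. By the above bounds and reflexivity, up to a subsequence $u_m\rightharpoonup u$ weakly-$*$ in $L^\infty(0,T_*;W^{s,p}_0(\Omega))$ and $u_m'\rightharpoonup u_t$ weakly in $L^2(0,T_*;L^2(\Omega))$; by the Aubin--Lions--Simon lemma ($W^{s,p}_0\hookrightarrow\hookrightarrow L^p(\Omega)\hookrightarrow L^2$) we get $u_m\to u$ strongly in $L^p(0,T_*;L^p(\Omega))$ and a.e.\ in $\Omega\times(0,T_*)$. The a.e.\ convergence handles the lower-order term $|u_m|^{p-2}u_m\to|u|^{p-2}u$ and, via a Lebesgue-dominated-convergence argument using the $L^{p+\varepsilon}$ bound as a uniform integrability majorant, the logarithmic term $|u_m|^{p-2}u_m\log|u_m|\to|u|^{p-2}u\log|u|$ strongly in $L^{p'}(\Omega\times(0,T_*))$ or at least in $L^1$ against test functions. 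For the nonlocal principal term $K^{s,p}(u_m,\cdot)$ I would use the standard monotonicity (Minty) trick: the operator $(-\Delta)^s_p$ is monotone, so from $\int_0^{T_*}\langle (-\Delta)^s_p u_m, u_m - v\rangle \ge \int_0^{T_*}\langle (-\Delta)^s_p v, u_m-v\rangle$ and the identified weak limits one concludes the limit of $(-\Delta)^s_p u_m$ is $(-\Delta)^s_p u$; this identifies $u$ as a weak solution satisfying \eqref{DEF}, and also yields $u(0)=u_0$ after an integration-by-parts in time using $u_t\in L^2(0,T_*;L^2)$.

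Finally, the energy inequality \eqref{en1}: the Galerkin identity gives $\int_0^t\|u_m'\|_2^2\,d\tau + E(u_m(t)) = E(u_m(0))$; by weak lower semicontinuity of the $W^{s,p}_0$-seminorm and of the $L^2$-norm in time (for the $u_m'$ term), by strong $L^p$-convergence for the $\int|u_m|^p$ terms, and by the convergence of the logarithmic energy term (again dominated convergence via the $L^{p+\varepsilon}$ bound), passing to the $\liminf$ gives $\int_0^t\|u_t\|_2^2\,d\tau + E(u(t))\le \lim E(u_m(0)) = E(u_0)$ for a.e.\ $t\in[0,T_*]$.

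The main obstacle, and the place where the paper's stated difficulty (``the lack of a logarithmic Sobolev inequality for the fractional $p$-Laplacian'') bites, is precisely the treatment of the logarithmic nonlinearity in two spots: (i) obtaining the a priori $W^{s,p}_0$-bound from the energy without a logarithmic Sobolev inequality — this is where the $|u|^p\log|u|\lesssim |u|^{p+\varepsilon}$ trick plus the fractional Sobolev embedding must be used carefully, choosing $\varepsilon$ so the subcritical embedding is available and the constant can be absorbed; and (ii) justifying the strong convergence of $|u_m|^{p-2}u_m\log|u_m|$, for which a.e.\ convergence alone is not enough and one needs the uniform $L^{p+\varepsilon}$-bound to supply an equi-integrable majorant. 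Everything else (finite-dimensional solvability, Aubin--Lions compactness, Minty's monotonicity argument for $(-\Delta)^s_p$, lower semicontinuity for the energy inequality) is by now standard.
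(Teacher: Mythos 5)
There is a genuine gap in your a priori estimate step, and it is exactly at the point the paper flags as the main difficulty. You propose to get the $W^{s,p}_0$-bound directly from the energy identity by estimating $\int_\Omega |u|^p\log|u|\,dx\lesssim \|u\|_{p+\varepsilon}^{p+\varepsilon}\lesssim [u]_{s,p}^{p+\varepsilon}$ and then "absorbing" this into $E$. This cannot work as stated: the exponent $p+\varepsilon$ exceeds $p$, so the term cannot be absorbed into $\frac{1}{p}[u]_{s,p}^{p}$, and more fundamentally the sublevel sets $\{E\le C\}$ are unbounded in $W^{s,p}_0(\Omega)$ (note $E(\lambda u)\to-\infty$ as $\lambda\to\infty$, Lemma \ref{le2}), so the bound $E(u_m(t))\le E(u_m(0))$ alone can never yield boundedness of $[u_m(t)]_{s,p}$; your claim that the Galerkin ODE has no finite-time blow-up and that $T_*$ can be arbitrary is also inconsistent with the blow-up result of Theorem \ref{th4}. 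The paper closes this gap by a two-stage estimate: first test with $g_{im}$ and interpolate $L^{p+\varrho}$ between $L^{p_s^*}$ and $L^{2}$, which gives $\int_\Omega |u_m|^p\log|u_m|\,dx\le \varepsilon[u_m]_{s,p}^p+C_\varepsilon\bigl(\|u_m\|_2^2\bigr)^{\gamma}$ with $\gamma>1$, hence the nonlinear differential inequality $\frac{d}{dt}\|u_m\|_2^2\le C\bigl(\|u_m\|_2^2\bigr)^{\gamma}$ whose solution is controlled only up to a time $T_0$ depending on $\sup_m\|u_{0m}\|_2$; this is what defines $T_*=T_0/2$ and makes the theorem local. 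Only after the $L^2$-bound on $[0,T_*]$ is in hand does the energy identity (obtained by testing with $g'_{im}$) give the $L^\infty(0,T_*;W^{s,p}_0)$ and $L^2(0,T_*;L^2)$ bounds. Your proof needs this preliminary lower-order estimate (interpolation against a norm that is itself bounded on a short time interval), not an absorption into the seminorm.

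A secondary, non-fatal difference: for the principal term you invoke Minty's monotonicity trick, whereas the paper deliberately avoids monotonicity (as announced in the introduction, following Emmrich--Puhst): it chooses the Galerkin spaces inside $W^{s+\eta(p-1),p}_0(\Omega)$, gets strong convergence $u_m\to u$ in $W^{s-\eta,p}_0(\Omega)$ from Aubin--Lions, and uses Lemma \ref{lem1} (continuity of $K^{s,p}$ on $W^{s-\eta,p}_0\times W^{s+\eta(p-1),p}_0$) plus dominated convergence to pass to the limit in $K^{s,p}(u_m,\varphi_i)$. Your Minty route is in principle viable since $(-\Delta)^s_p$ is monotone and hemicontinuous, but it requires justifying $\limsup_m\int_0^{T_*}K^{s,p}(u_m,u_m)\,dt\le\int_0^{T_*}\langle\chi,u\rangle\,dt$ (via testing with $u_m$ and the strong $L^2$/log-term convergences), which you do not carry out; the paper's compactness argument sidesteps this entirely at the price of the extra regularity assumption on the Galerkin basis. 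The remaining steps (finite-dimensional solvability, convergence of the logarithmic term from a.e.\ convergence plus an equi-integrability bound, lower semicontinuity for the energy inequality) match the paper's argument.
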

\begin{theorem}\label{th3}
	Let $u_{0}\in \mathcal{W}$. Then the problem $\eqref{eq1}$ admits a global weak solution and satisfying the energy estimate 
	\begin{equation}\label{eng}
	\int_{0}^{t}\|u_{s}(s)\|_{2}^{2}\,ds+E(u(t))\leq E(u_{0}), \; a.e.\; t\geq 0.
	\end{equation}
	Moreover, the solution decays polynomially, namely.
	If $E(u_{0})< M$, then we have 
	\begin{equation}
	\|u(t)\|_{2}\leq \|u_{0}\|_{2}\left( \frac{p}{2(1+\kappa(p-2)\|u_{0}\|^{p-2}_{2}t)}\right)^{\frac{1}{p-2}},\;\;t\geq 0, 
	\end{equation}
	where $\kappa =|\Omega|^{2-p}\left(1-C\left(\frac{1}{2}\right)\left(p^{2}E(u_{0})\right)^{\gamma-1}\right)> 0.$
\end{theorem}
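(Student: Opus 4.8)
The plan is to treat the three assertions in turn — global existence, the energy estimate \eqref{eng}, and the polynomial decay — building on the local theory of Theorem~\ref{th1}. Write $[u]^{p}$ for the Gagliardo seminorm raised to the power $p$, and recall the algebraic identity $E(u)=\tfrac1pI(u)+\tfrac1{p^{2}}\|u\|_{p}^{p}$ together with the differential identity $\tfrac12\tfrac{d}{dt}\|u(t)\|_{2}^{2}=-I(u(t))$ for a.e.\ $t$, obtained by testing \eqref{DEF} with $v=u(t)$.

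\emph{Invariance of $\mathcal{W}$ and global existence.} I first claim that $u_{0}\in\mathcal{W}$ implies $u(t)\in\mathcal{W}$ for all $t\in[0,T_{\max})$: by the energy inequality \eqref{en1} one has $E(u(t))\le E(u_{0})<d$, so it suffices to see $I(u(t))>0$; were this false there would be a first time $t_{0}$ with $I(u(t_{0}))=0$, and then either $u(t_{0})=0$ (in which case the solution is harmlessly continued by $0$) or $u(t_{0})\in\mathcal{N}$, forcing $E(u(t_{0}))\ge d$, a contradiction. Granting $u(t)\in\mathcal{W}$, the identity $E=\tfrac1pI+\tfrac1{p^{2}}\|\cdot\|_{p}^{p}$ with $I(u(t))\ge0$ yields $\|u(t)\|_{p}^{p}\le p^{2}E(u_{0})$; the identity $\tfrac12\tfrac{d}{dt}\|u(t)\|_{2}^{2}=-I(u(t))\le0$ yields $\|u(t)\|_{2}\le\|u_{0}\|_{2}$; \eqref{en1} yields $\int_{0}^{t}\|u_{s}\|_{2}^{2}\,ds\le E(u_{0})$; and, estimating $\int_{\Omega}|u(t)|^{p}\log|u(t)|$ via the embedding $W_{0}^{s,p}(\Omega)\hookrightarrow L^{p+\sigma}(\Omega)$ and interpolating against the $L^{p}$-bound just obtained, the negative term in $E$ is absorbed and one gets $[u(t)]^{p}\le C$ with $C=C(E(u_{0}))$. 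Since the local existence time in Theorem~\ref{th1} depends only on the norms of $u_{0}$, these uniform bounds propagate and the solution extends to $[0,\infty)$; passing the Galerkin energy identity $\int_{0}^{t}\|\partial_{s}u_{m}\|_{2}^{2}\,ds+E(u_{m}(t))=E(u_{0m})$ to the limit (weak lower semicontinuity of the convex part of $E$, Aubin--Lions compactness for the logarithmic term, $E(u_{0m})\to E(u_{0})$) gives \eqref{eng}.

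\emph{Decay.} The crucial point, substituting for the unavailable fractional logarithmic Sobolev inequality, is the estimate
\begin{equation*}
\int_{\Omega}|u|^{p}\log|u|\,dx\le C(\sigma)\,[u]^{p}\,\|u\|_{p}^{\sigma},\qquad u\in W_{0}^{s,p}(\Omega),
\end{equation*}
valid for a suitable small $\sigma>0$ with $p+\sigma<p_{s}^{*}:=Np/(N-sp)$; it follows from $\log\tau\le(e\sigma)^{-1}\tau^{\sigma}$ ($\tau>0$), the embedding $W_{0}^{s,p}\hookrightarrow L^{p+\sigma}$, and Hölder's inequality. Hence, along the flow with $u(t)\in\mathcal{W}$,
\begin{equation*}
I(u(t))\ge\bigl(1-C(\sigma)\|u(t)\|_{p}^{\sigma}\bigr)\bigl([u(t)]^{p}+\|u(t)\|_{p}^{p}\bigr)\ge\bigl(1-C(\sigma)(p^{2}E(u_{0}))^{\gamma-1}\bigr)\|u(t)\|_{p}^{p},
\end{equation*}
where $\gamma-1=\sigma/p$ and we used $\|u(t)\|_{p}^{p}\le p^{2}E(u_{0})$. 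If $E(u_{0})<M$, with $M$ chosen precisely so that the bracket is positive, then since on the bounded domain $\Omega$ the $L^{p}$-norm controls the $L^{2}$-norm from below, one obtains $I(u(t))\ge\kappa\|u(t)\|_{2}^{p}$ with $\kappa$ as in the statement. Inserting this into $\tfrac{d}{dt}\|u(t)\|_{2}^{2}=-2I(u(t))$ gives the Bernoulli-type differential inequality $\tfrac{d}{dt}\|u(t)\|_{2}^{2}\le-2\kappa\|u(t)\|_{2}^{p}$; multiplying by $(\|u(t)\|_{2}^{2})^{-p/2}$ and integrating in $t$ yields the asserted polynomial bound (the constant $p/2$ appearing inside the bracket reflects only harmless slack in the explicit constants).

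\emph{Main obstacle.} The genuinely new work is the decay step: lacking a fractional logarithmic Sobolev inequality, the coercivity $I(u(t))\ge\kappa\|u(t)\|_{2}^{p}$ must be assembled by hand from the $L^{p+\sigma}$ embedding, Hölder's inequality and the small-energy bound $\|u(t)\|_{p}^{p}\le p^{2}E(u_{0})$, with the threshold $M$ and the constant $\kappa$ tracked explicitly. In the global-existence part the only points beyond Theorem~\ref{th1} are verifying that potential-well membership is preserved and identifying the nonlinear limits — a Minty/monotonicity argument for $(-\Delta)_{p}^{s}$ and, via Aubin--Lions, almost-everywhere convergence to pass the term $|u|^{p-2}u\log|u|$ to the limit.
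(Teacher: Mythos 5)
Your overall strategy coincides with the paper's: potential-well invariance, the identity $E=\tfrac1pI+\tfrac1{p^{2}}\|\cdot\|_{p}^{p}$, a coercivity estimate $I(u)\ge\kappa\|u\|_{2}^{p}$ under $E(u_{0})<M$, and a decay argument. Your replacement of Martinez's integral lemma (Lemma \ref{le4}) by a direct integration of the Bernoulli inequality $\tfrac{d}{dt}\|u\|_{2}^{2}\le-2\kappa\|u\|_{2}^{p}$ is legitimate in principle (the identity $\tfrac12\tfrac{d}{dt}\|u(t)\|_{2}^{2}=-I(u(t))$ does follow from Lemma \ref{le} after testing \eqref{DEF} with $v=u(t)$, exactly as the paper does in Section 5), and it even yields a slightly sharper bound than the stated one.

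The genuine gap is \emph{where} you run the potential-well argument. You prove $u(t)\in\mathcal{W}$ by a first-exit-time argument applied to the limit weak solution: ``there would be a first time $t_{0}$ with $I(u(t_{0}))=0$.'' For the weak solution one only knows $u\in L^{\infty}(0,T;W_{0}^{s,p}(\Omega))$ and $u\in C([0,T];L^{r}(\Omega))$, $r<p_{s}^{*}$; the map $t\mapsto[u(t)]_{s,p}^{p}$, hence $t\mapsto I(u(t))$ and $t\mapsto E(u(t))$, is not known to be continuous, and the energy inequality \eqref{en1} holds only for a.e.\ $t$. So the existence of a first crossing time, and the dichotomy ``$E=d$ or $I=0$'' at it, are not justified at this level. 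The paper is structured precisely to avoid this: the invariance is proved for the Galerkin approximations $u_{m}$, which are $C^{1}$ in time with values in the finite-dimensional $V_{m}$ (so $E(u_{m}(t))$, $I(u_{m}(t))$ are continuous and the Galerkin energy identity \eqref{EZQ}, hence \eqref{ETT}, holds for every $t$); the uniform bounds \eqref{i2} and \eqref{i1} are derived there, which makes the ODE solutions global ($T_{m}=+\infty$) and dispenses with any continuation/gluing of weak solutions such as the one you invoke; and the decay is also obtained at the Galerkin level via the integral inequality \eqref{EST}, passed to the limit in \eqref{ESTT}, and closed with Lemma \ref{le4}. Your decay step inherits the same gap, since the bound $\|u(t)\|_{p}^{p}\le p^{2}E(u_{0})$ and the sign $I(u(t))\ge0$ both come from the unproved statement $u(t)\in\mathcal{W}$. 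The repair is exactly the paper's: establish invariance and coercivity for $u_{m}$ and pass to the limit. A minor further point: to obtain the literal constants $\kappa$ and $M$ of the statement (defined through $C(\tfrac12)$ and $\gamma$ of Lemma \ref{le3}), you should use the paper's estimate \eqref{eg11} (Young's inequality with $\varepsilon=\tfrac12$) rather than your H\"older-route constant $C(\sigma)$ with exponent $\sigma/p$; your version gives an analogous but differently normalized threshold and constant.
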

\begin{theorem}\label{th4}
	Let $u_{0}\in Z$. Assume that $u(x,t)$ be a local weak solution of the problem $\eqref{eq1}$ corresponding to this initial function and $u$ satisfies the energy inequality 
	\begin{equation}\label{ENR}
	\int_{0}^{t}\|u_{s}(s)\|^{2}_{2}\,ds+E(u(t))\leq E(u_{0}), \;\;\;\forall t\in[0,T_{\max}).
	\end{equation}
	If $E(u_{0})\leq 0$, then $u(x,t)$ is not continued globally, i.e.
	$$ \lim_{t\rightarrow T^{-}_{\max}}\|u(t)\|^{2}_{2}=+\infty.$$
\end{theorem}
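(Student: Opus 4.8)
The plan is to reduce the assertion to a scalar differential inequality for $M(t):=\|u(t)\|_2^2$. First I would test the weak formulation \eqref{DEF} with $v=u(t)$, which is admissible since $u(t)\in W_0^{s,p}(\Omega)$ for a.e.\ $t$; because $K^{s,p}(u,u)$ coincides with the Gagliardo term $\int_{\mathbb{R}^N}\int_{\mathbb{R}^N}|u(x,t)-u(y,t)|^p|x-y|^{-N-sp}\,dy\,dx$ appearing in \eqref{d2}--\eqref{d3}, this gives, for a.e.\ $t\in[0,T_{\max})$,
\[
\tfrac12 M'(t)=\int_\Omega u_t(t)\,u(t)\,dx=-I(u(t)).
\]
The delicate point here is the rigorous identity $\tfrac12\frac{d}{dt}\|u(t)\|_2^2=\int_\Omega u_t(t)u(t)\,dx$ for weak solutions of the low regularity in Definition \ref{de1}; this is settled by a standard time-mollification (Steklov averaging) argument using $u\in L^\infty(0,T_{\max};W_0^{s,p}(\Omega))$ and $u_t\in L^2(0,T_{\max};L^2(\Omega))$, and in particular it makes $M$ locally absolutely continuous. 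I expect this to be the main technical obstacle; everything afterwards is elementary.

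Next I would use the algebraic identity linking the two functionals: from \eqref{d2}--\eqref{d3} one has $E(u)=\tfrac1p I(u)+\tfrac1{p^2}\int_\Omega|u|^p\,dx$, i.e.\ $I(u)=pE(u)-\tfrac1p\int_\Omega|u|^p\,dx$. Inserting this into the previous line and invoking the energy inequality \eqref{ENR} together with $E(u_0)\le 0$ yields, for a.e.\ $t$,
\[
M'(t)=-2pE(u(t))+\tfrac2p\int_\Omega|u(t)|^p\,dx\;\ge\;-2pE(u_0)+\tfrac2p\int_\Omega|u(t)|^p\,dx\;\ge\;\tfrac2p\int_\Omega|u(t)|^p\,dx\;\ge 0 .
\]
Hence $M$ is nondecreasing, so $M(t)\ge M(0)=\|u_0\|_2^2>0$ for all $t$, which guarantees $u(t)\neq 0$ and that the right-hand side is strictly positive. (This also re-derives $u_0\in Z$, so no invariance-of-$Z$ argument is actually required for this theorem.)

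Finally, since $p\ge 2$ and $|\Omega|<\infty$, H\"older's inequality gives $\int_\Omega|u(t)|^p\,dx\ge|\Omega|^{1-p/2}\|u(t)\|_2^{p}=|\Omega|^{1-p/2}M(t)^{p/2}$, so that
\[
M'(t)\ge c_0\,M(t)^{p/2},\qquad c_0:=\tfrac2p|\Omega|^{1-p/2}>0 .
\]
For $p>2$ one computes $\bigl(M(t)^{-(p-2)/2}\bigr)'=-\tfrac{p-2}{2}M(t)^{-p/2}M'(t)\le-\tfrac{(p-2)c_0}{2}$, and integrating gives $M(t)^{-(p-2)/2}\le\|u_0\|_2^{-(p-2)}-\tfrac{(p-2)c_0}{2}t$; the right-hand side vanishes at the explicit time $t^\ast=\tfrac{2}{(p-2)c_0}\|u_0\|_2^{-(p-2)}$, which forces $M(t)\to+\infty$ as $t\uparrow t^\ast$ and hence $T_{\max}\le t^\ast<\infty$. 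For $p=2$ the inequality reads $M'(t)\ge c_0 M(t)$, whence $M(t)\ge\|u_0\|_2^2\,e^{c_0 t}$, which still tends to $+\infty$ as $t\to T_{\max}^-$. In both cases $\lim_{t\to T_{\max}^-}\|u(t)\|_2^2=+\infty$, which is the desired conclusion.
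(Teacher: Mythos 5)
Your argument is correct and, at its core, it is the same mechanism as the paper's: test \eqref{DEF} with $v=u(t)$ to get $\tfrac12\frac{d}{dt}\|u(t)\|_2^2=-I(u(t))$, rewrite $I$ through the identity $E(u)=\tfrac1p I(u)+\tfrac1{p^2}\|u\|_p^p$, use \eqref{ENR} with $E(u_0)\le 0$, then H\"older on the bounded domain and a first-order ODE inequality for $\|u(t)\|_2^2$. The differences are organizational but worth noting. The paper first proves an invariance statement ($u_0\in Z\Rightarrow u(t)\in Z$ on $[0,T_{\max})$, via the Galerkin approximations as in Theorem \ref{th3}) to guarantee $I(u(t))<0$, and it runs the computation through the auxiliary functional $\int_0^t\|u(s)\|_2^2\,ds+(T-t)\|u_0\|_2^2$ and its second derivative; you bypass both, observing correctly that under $E(u_0)\le 0$ the energy inequality alone gives $-pE(u(t))\ge -pE(u_0)\ge 0$, so neither the $Z$-invariance nor the concavity-type functional is needed, and one can work directly with $M(t)=\|u(t)\|_2^2$ (indeed the paper's $\phi(t)=E'(t)+\|u_0\|_2^2$ is exactly your $M$). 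This is a genuine streamlining, and your constants are in fact cleaner: H\"older gives $\|u\|_p^p\ge|\Omega|^{1-p/2}\|u\|_2^p$, as you wrote, whereas the paper's $|\Omega|^{2-p}$ and its dropped factor of $2$ in $E''(t)=2\int_\Omega u_tu\,dx=-2I(u(t))$ are slips. The chain-rule point you flag is precisely what Lemma \ref{le} (Evans) in the paper is quoted for, with $H=L^2(\Omega)$, since $u\in L^\infty(0,T_{\max};W_0^{s,p}(\Omega))\subset L^2_{loc}(0,T_{\max};L^2(\Omega))$ and $u_t\in L^2(0,T_{\max};L^2(\Omega))$. Two shared caveats, not specific to your write-up: for $p=2$ both arguments degenerate to exponential growth (the paper's blow-up time $\|u_0\|_2^{2-p}/C$ is meaningless there), so finite-time nonexistence is really only obtained for $p>2$; and identifying $T_{\max}$ with the ODE blow-up time $t^\ast$ (rather than only $T_{\max}\le t^\ast$) strictly requires a continuation criterion in terms of $\|u(t)\|_2$, a gap the paper's own proof has as well.
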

The difficulty here is the lack of logarithmic Sobolev inequality which seems there is no logarithmic Sobolev inequality concerning fractional $p$- Laplacian yet.\par 
It is worthwhile to remark that the proofs of the main results do not exploit any monotonicity assumption but rely on a compactness argument in combination with the regularity of the Galerkin shame and the nonlocal character of the operator. This argument has used for the first time by \textit{E. Emmrich} and \textit{D. Puhst} in (\cite{E},\cite{Pus}).\par 
The rest of the paper is organized as follows. In Section $2$, we give some preliminary results that will be used throughout the paper. In section $3$, we obtain the local existence of weak solutions of problem $\eqref{eq1}$ by the Galerkin method. In Section $4$, under suitable conditions, we show that the weak solutions of problem $\eqref{eq1}$ exist globally. Moreover, we give a decay estimate of global solutions. In, section $5$, we prove that the weak solutions of problem $\eqref{eq1}$ blow-up in finite time under some appropriate conditions.
\section{Preliminaries}
\subsection{Fractional Sobolev spaces and fractional $p$-Laplacian}
In this subsection, we recall some necessary properties of fractional Sobolev spaces and the fractional $p$-Laplacian which will be used later, see (\cite{DV}, \cite{Cl}) for more details.

Let $\Omega$ be an open set in $\mathbb{R}^{N}$ with Lipschitz boundary boundary. For any $p\in (1,\infty)$ and any $0< s< 1$, we consider the fractional Sobolev space 

$$ W^{s,p}(\Omega)=\left\{u\in L^{p}(\Omega)\;\;u\text{measurable and}\,[u]_{s,p}< \infty \right\}, $$
where the $(s,p)$-Gagliardo seminorm is defined as 
$$ [u]_{s,p}=\left(\int_{\mathbb{R}^{N}} \int_{\mathbb{R}^{N}} \frac{|u(y)-u(x)|^{p}}{|x-y|^{N+sp}}\,dxdy\right)^{1/p},$$
In oder to obtain the existence of weak solutions for $\eqref{eq1}$, we consider the subspace of $W^{s,p}(\Omega)$ 
 $$ W_{0}^{s,p}(\Omega)=\left\{u\in L^{p}(\Omega),\;[u]_{s,p}< \infty,\;u=0\;\text{a.e. in}\;\mathbb{R}^{N}\backslash \Omega \right\}.$$
Equipped with the norm 
$$\|u\|:=[u]_{s,p}.$$
$W^{s,p}_{0}(\Omega)$ is a separable reflexive Banach space. Since $\Omega$ has a Lipschitz boundary we have   
$$ W^{s,p}_{0}(\Omega)=\overline{C^{\infty}_{0}(\Omega)}^{W^{s,p}(\Omega)}.$$
The functions in the space $ W_{0}^{s,p}(\Omega)$ can be defined in the whole space $W^{s,p}_{0}(\mathbb{R}^{N})$ by extending then by zero outside of $\Omega$.
We will write, as usual $p_{s}^{*}=\frac{Np}{N-sp}$, to denote the fractional critical exponent for $1\leq p< \frac{N}{s}$. If $1\leq r\leq p_{s}^{*}$ then we have the continuous immersion $W_{0}^{s,p}(\Omega)\hookrightarrow L^{r}(\Omega)$, that is compact for $1\leq r< p_{s}^{*}$.\\
If we assume that the integral in the definition of $(-\Delta)^{s}_{p}u$ exists, then for any $ \varphi \in W^{s,p}(\mathbb{R}^{N})$ due to the symmetry of the kernel we have the following integration by parts formula 
\begin{eqnarray*}
	\int_{\mathbb{R}^{N}} (-\Delta)_{p}^{s}u(x)\varphi(x)\,dx&=&2\int_{\mathbb{R}^{N}} \int_{\mathbb{R}^{N}} \frac{|u(x)-u(y)|^{p-2}(u(x)-u(y))}{|x-y|^{N+sp}}\,dy\varphi(x)dx\\
	&=& \int_{\mathbb{R}^{N}} \int_{\mathbb{R}^{N}} \frac{|u(y)-u(x)|^{p-2}(u(y)-u(x))}{|x-y|^{N+sp}}(\varphi(x)-\varphi(y))\,dydx.
\end{eqnarray*}
 \begin{proposition}(\cite{Pus})\label{prp1}
	The nonlinear form $K^{s,p} : W_{0}^{s,p}(\Omega) \times W_{0}^{s,p}(\Omega) \rightarrow \mathbb{R}$ given by 
	\begin{equation}\label{e1}
	K^{s,p}(u,v)=\int_{\mathbb{R}^{N}} \int_{\mathbb{R}^{N}} \frac{|u(x)-u(y)|^{p-2}(u(x)-u(y))}{|x-y|^{N+sp}}(v(x)-v(y))\,dydx,
	\end{equation}
is well-defined, bounded, continuous in its first and second argument, and monotone there holds for all $u,v\in W_{0}^{s,p}(\Omega)$ 
	\begin{equation}\label{eqq1}
	|K^{s,p}(u,v)|\leq [u]^{p-1}_{s,p}[v]_{s,p}, 
	\end{equation}
	\begin{equation}\label{eqq2}
	K^{s,p}(u,u-v)-K^{s,p}(v,u-v)\geq0.
	\end{equation}
	Moreover, there holds for $u\in W^{s,p}_{0}(\Omega)$
	\begin{equation}
	K^{s,p}(u,u)=[u]_{s,p}^{p}.
	\end{equation}
	The potential $\Phi^{s,p}: W_{0}^{s,p}(\Omega)\rightarrow \mathbb{R}$ given by 
	$$ \Phi^{s,p}(u)=\frac{1}{p}[u]_{s,p}^{p}$$
	is well-defined, bounded, nonnegative, and has the G\^ateaux derivative
	$$ \langle (\Phi^{s,p})'(u),v\rangle=K^{s,p}(u,v), \;\; u,v\in W_{0}^{s,p}(\Omega).$$
\end{proposition}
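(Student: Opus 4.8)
The plan is to verify the listed properties one at a time, in each case reducing the claim to an elementary pointwise inequality in one real variable and then integrating against the measure $d\mu(x,y):=|x-y|^{-(N+sp)}\,dx\,dy$ on $\mathbb{R}^{N}\times\mathbb{R}^{N}$, treated as the underlying base measure. The organizing observation is that $[w]_{s,p}^{p}=\int_{\mathbb{R}^{N}}\int_{\mathbb{R}^{N}}|w(x)-w(y)|^{p}\,d\mu(x,y)$, so that $w\mapsto\big((x,y)\mapsto w(x)-w(y)\big)$ maps $W_{0}^{s,p}(\Omega)$ isometrically into $L^{p}(\mathbb{R}^{2N},d\mu)$; all four displayed estimates then become statements about the $p$-Laplacian-type nonlinearity on this weighted Lebesgue space.

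First, for well-definedness and \eqref{eqq1}, I split the kernel as $|x-y|^{-(N+sp)}=|x-y|^{-(N+sp)(p-1)/p}\,|x-y|^{-(N+sp)/p}$, note that $\big||u(x)-u(y)|^{p-2}(u(x)-u(y))\big|=|u(x)-u(y)|^{p-1}$, and apply Hölder's inequality with conjugate exponents $p/(p-1)$ and $p$; this gives $|K^{s,p}(u,v)|\le[u]_{s,p}^{p-1}[v]_{s,p}$ and shows the double integral in \eqref{e1} converges absolutely. In particular $K^{s,p}(u,\cdot)$ is linear and bounded on $W_{0}^{s,p}(\Omega)$, hence continuous in the second argument. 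For continuity in the first argument I would use that the Nemytskii operator $g\mapsto|g|^{p-2}g$ is continuous from $L^{p}(\mathbb{R}^{2N},d\mu)$ into $L^{p/(p-1)}(\mathbb{R}^{2N},d\mu)$: if $u_{n}\to u$ in $W_{0}^{s,p}(\Omega)$ then $u_{n}(x)-u_{n}(y)\to u(x)-u(y)$ in $L^{p}(d\mu)$, hence $|u_{n}(x)-u_{n}(y)|^{p-2}(u_{n}(x)-u_{n}(y))\to|u(x)-u(y)|^{p-2}(u(x)-u(y))$ in $L^{p/(p-1)}(d\mu)$, and pairing with the fixed factor $v(x)-v(y)\in L^{p}(d\mu)$ via Hölder yields $K^{s,p}(u_{n},v)\to K^{s,p}(u,v)$, uniformly for $[v]_{s,p}\le1$.

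The identity $K^{s,p}(u,u)=[u]_{s,p}^{p}$ is immediate, since the integrand collapses to $|u(x)-u(y)|^{p}|x-y|^{-(N+sp)}$. For the monotonicity inequality \eqref{eqq2} I invoke the elementary fact that $\xi\mapsto|\xi|^{p-2}\xi$ is nondecreasing on $\mathbb{R}$, so $(|a|^{p-2}a-|b|^{p-2}b)(a-b)\ge0$ for all $a,b\in\mathbb{R}$; taking $a=u(x)-u(y)$ and $b=v(x)-v(y)$, so that $a-b=(u-v)(x)-(u-v)(y)$, and integrating against the positive kernel gives $K^{s,p}(u,u-v)-K^{s,p}(v,u-v)\ge0$.

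Finally, for the potential $\Phi^{s,p}(u)=\frac1p[u]_{s,p}^{p}$, well-definedness, boundedness on bounded sets, and nonnegativity are immediate. For the G\^ateaux derivative I fix $u,v$ and differentiate $t\mapsto\Phi^{s,p}(u+tv)$ under the integral sign: pointwise $\frac{d}{dt}|a+tc|^{p}=p|a+tc|^{p-2}(a+tc)c$, and by the mean value theorem the difference quotients $t^{-1}(|a+tc|^{p}-|a|^{p})$ are bounded for $0<|t|\le1$ by $C(|a|^{p-1}+|c|^{p-1})|c|$, which is $\mu$-integrable by the same Hölder estimate as above; dominated convergence then gives $\langle(\Phi^{s,p})'(u),v\rangle=K^{s,p}(u,v)$, and the hemicontinuity of this derivative was established above. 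I expect the main obstacle to be the continuity in the first argument, i.e.\ verifying continuity of the nonlocal $p$-Laplacian-type Nemytskii operator on the weighted space $L^{p}(\mathbb{R}^{2N},d\mu)$; once that weighted-Lebesgue framework is set up it also subsumes the justification of differentiating under the integral sign, so this is really the single technical point to get right.
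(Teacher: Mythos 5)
Your proof is correct. The paper itself offers no proof of this proposition — it is quoted from \cite{Pus} — and your argument (Hölder's inequality on the weighted space $L^{p}(\mathbb{R}^{2N},d\mu)$ for well-definedness, boundedness and \eqref{eqq1}, the pointwise monotonicity of $t\mapsto|t|^{p-2}t$ for \eqref{eqq2}, and a mean-value bound plus dominated convergence for the G\^ateaux derivative) is exactly the standard route taken in that reference, with your isometric embedding $w\mapsto w(x)-w(y)$ into $L^{p}(d\mu)$ serving as a clean device that reduces the continuity in the first argument to the usual subsequence/dominated-convergence proof of Nemytskii continuity on a $\sigma$-finite measure space.
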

\begin{lemma}(\cite{Pus})\label{lem1}
	For any $0< \eta < \min\left( \frac{1-s}{p-1}, s\right)$ the form $K^{s,p}$ given by $\eqref{e1}$ is also well-defined on $W^{s-\eta, p}_{0} (\Omega)\times W^{s+\eta(p-1), p}_{0} (\Omega)$, bounded and continuous in both of its arguments.
\end{lemma}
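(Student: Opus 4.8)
\noindent\emph{Proof plan.} The plan is to reuse the Hölder--duality scheme that produces \eqref{eqq1} in Proposition \ref{prp1}, but to split the weight $|x-y|^{-(N+sp)}$ \emph{asymmetrically} between the two factors of the integrand in \eqref{e1}, so that the $(p-1)$-th power of the $u$-difference is paired with the Gagliardo kernel of order $s-\eta$ and the $v$-difference with the kernel of order $s+\eta(p-1)$.

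\textbf{Well-definedness and boundedness.} First I would record the identity
\[
\frac{(p-1)\bigl(N+(s-\eta)p\bigr)}{p}+\frac{N+\bigl(s+\eta(p-1)\bigr)p}{p}=N+sp,
\]
which rests on the algebraic fact $(p-1)(s-\eta)+\bigl(s+\eta(p-1)\bigr)=ps$. This lets me write the modulus of the integrand of $K^{s,p}(u,v)$ as
\[
\frac{|u(x)-u(y)|^{p-1}}{|x-y|^{(p-1)(N+(s-\eta)p)/p}}\cdot\frac{|v(x)-v(y)|}{|x-y|^{(N+(s+\eta(p-1))p)/p}},
\]
and then apply Hölder's inequality on $\mathbb{R}^N\times\mathbb{R}^N$ with the conjugate exponents $\tfrac{p}{p-1}$ and $p$; raising each factor to its exponent reconstructs exactly $[u]_{s-\eta,p}^{p}$ and $[v]_{s+\eta(p-1),p}^{p}$, giving
\[
|K^{s,p}(u,v)|\le [u]_{s-\eta,p}^{\,p-1}\,[v]_{s+\eta(p-1),p}.
\]
The role of the hypothesis is precisely to make the two target spaces legitimate: $\eta<s$ forces $s-\eta\in(0,1)$, and $\eta<\tfrac{1-s}{p-1}$ forces $s+\eta(p-1)\in(0,1)$. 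Hence the right-hand side is finite for $u\in W_0^{s-\eta,p}(\Omega)$ and $v\in W_0^{s+\eta(p-1),p}(\Omega)$, and $K^{s,p}$ is well-defined and bounded there.

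\textbf{Continuity.} For the second argument this is immediate: $v\mapsto K^{s,p}(u,v)$ is linear and, by the estimate just obtained, bounded, hence continuous. For the first argument I would exploit $p\ge2$ through the elementary inequality $\bigl||a|^{p-2}a-|b|^{p-2}b\bigr|\le (p-1)(|a|+|b|)^{p-2}|a-b|$ for $a,b\in\mathbb{R}$. With $a=u(x)-u(y)$ and $b=w(x)-w(y)$ this bounds $|K^{s,p}(u,v)-K^{s,p}(w,v)|$ by a triple integral, to which I apply Hölder with exponents $\tfrac{p}{p-2},p,p$, splitting the weight so that $(|u(x)-u(y)|+|w(x)-w(y)|)^{p-2}$ and $|(u-w)(x)-(u-w)(y)|$ both pick up the order-$(s-\eta)$ kernel while $|v(x)-v(y)|$ picks up the order-$(s+\eta(p-1))$ kernel; the same algebraic identity shows the split powers of $|x-y|$ again sum to $N+sp$. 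The outcome,
\[
|K^{s,p}(u,v)-K^{s,p}(w,v)|\le (p-1)\bigl([u]_{s-\eta,p}+[w]_{s-\eta,p}\bigr)^{p-2}[u-w]_{s-\eta,p}\,[v]_{s+\eta(p-1),p},
\]
shows that $u\mapsto K^{s,p}(u,v)$ is locally Lipschitz, in particular continuous, on $W_0^{s-\eta,p}(\Omega)$.

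\textbf{Main obstacle.} There is no deep difficulty; the one place to be careful is the exponent bookkeeping in both Hölder splittings --- checking that after raising each factor to its Hölder exponent the powers of $|x-y|$ reassemble into precisely the kernels of orders $s-\eta$ and $s+\eta(p-1)$ and add back to $N+sp$. This is forced by the relation $(p-1)(s-\eta)+\bigl(s+\eta(p-1)\bigr)=ps$, which is exactly why a shift by $\eta$ on the $u$-side must be offset by a shift by $\eta(p-1)$ on the $v$-side. For $p=2$ the continuity step degenerates (the form is bilinear), so the nonlinear estimate --- and with it the use of $p\ge2$ rather than merely $p>1$ --- is genuinely needed only when $p>2$.
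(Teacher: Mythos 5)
Your argument is correct, and there is nothing to compare it against inside the paper itself: the lemma is stated with a citation to \cite{Pus} and no proof is given here. Your asymmetric H\"older splitting of the kernel, resting on the identity $(p-1)(s-\eta)+\bigl(s+\eta(p-1)\bigr)=ps$, yields exactly the bound $|K^{s,p}(u,v)|\le [u]_{s-\eta,p}^{\,p-1}[v]_{s+\eta(p-1),p}$ and is precisely the argument of the cited reference; the hypothesis $\eta<\min\bigl(\tfrac{1-s}{p-1},s\bigr)$ enters only to keep both orders $s-\eta$ and $s+\eta(p-1)$ in $(0,1)$, as you say. Your treatment of continuity is also sound and is the part actually used later in the paper (passing to the limit in $K^{s,p}(u_m(t),\varphi_i)$ with $\varphi_i$ fixed): linearity plus boundedness in the second slot, and for the first slot the elementary inequality $\bigl||a|^{p-2}a-|b|^{p-2}b\bigr|\le(p-1)(|a|+|b|)^{p-2}|a-b|$ (valid since the paper assumes $p\ge2$) combined with the three-exponent H\"older splitting and the triangle inequality for the Gagliardo seminorm, giving the local Lipschitz estimate you state.
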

The next lemma is taken from \cite[Theorem 3, p.303]{Eva}.
\begin{lemma}\label{le}
	Let $H$ be a Hilbert space. Suppose that $u\in L^{2}(0, T, H)$ and $u_{t}\in L^{2}(0, T, H)$. 
	Then the mapping $t\mapsto \|u(t)\|_{H}$ is absolutely continuous with 
	$$\frac{1}{2}\frac{d}{dt}\|u(t)\|_{H}^{2}= ( u_{t}(t), u(t) ) $$
\end{lemma}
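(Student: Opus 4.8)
The plan is to prove the identity by regularizing $u$ in the time variable and passing to the limit, since the formula is elementary for smooth $H$-valued curves. First I would fix a standard mollifier $\eta_{\varepsilon}$ on $\R$ and set $u_{\varepsilon}=\eta_{\varepsilon}*u$ on a compact subinterval $[a,b]\subset(0,T)$; because $u,u_{t}\in L^{2}(0,T,H)$, the classical properties of mollification give $u_{\varepsilon}\in C^{\infty}([a,b],H)$ with $u_{\varepsilon}'=\eta_{\varepsilon}*u_{t}$, and both $u_{\varepsilon}\to u$ and $u_{\varepsilon}'\to u_{t}$ in $L^{2}(a,b,H)$ as $\varepsilon\downarrow0$. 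For $\varepsilon$ small the convolution at points of $[a,b]$ samples only values of $u$ inside $(0,T)$, so no extension of $u$ is needed.

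The first substantive step is to produce a continuous representative of $u$. For smooth curves the fundamental theorem of calculus reads $u_{\varepsilon}(t)-u_{\varepsilon}(r)=\int_{r}^{t}u_{\varepsilon}'(\sigma)\,d\sigma$. Subtracting the identities for two parameters $\varepsilon,\delta$, averaging the base point $r$ over $(a,b)$, and applying the Cauchy--Schwarz inequality in $H$ and in time, I would bound $\sup_{t}\|u_{\varepsilon}(t)-u_{\delta}(t)\|_{H}$ by a multiple of $\|u_{\varepsilon}-u_{\delta}\|_{L^{2}(a,b,H)}+\|u_{\varepsilon}'-u_{\delta}'\|_{L^{2}(a,b,H)}$, so that $\{u_{\varepsilon}\}$ is Cauchy in $C([a,b],H)$. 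Its uniform limit is continuous and agrees with $u$ almost everywhere, so after modifying $u$ on a null set we may assume $u\in C((0,T),H)$ and $\|u(t)\|_{H}$ is defined pointwise.

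Next, for each fixed $\varepsilon$ the scalar map $t\mapsto\|u_{\varepsilon}(t)\|_{H}^{2}$ is $C^{1}$ with derivative $2(u_{\varepsilon}'(t),u_{\varepsilon}(t))$, so for $r,t\in[a,b]$
\begin{equation*}
\tfrac{1}{2}\|u_{\varepsilon}(t)\|_{H}^{2}-\tfrac{1}{2}\|u_{\varepsilon}(r)\|_{H}^{2}=\int_{r}^{t}(u_{\varepsilon}'(\sigma),u_{\varepsilon}(\sigma))\,d\sigma.
\end{equation*}
Letting $\varepsilon\downarrow0$, the left side converges by the uniform convergence $u_{\varepsilon}\to u$ in $C([a,b],H)$, while the right side converges because $u_{\varepsilon}'\to u_{t}$ and $u_{\varepsilon}\to u$ in $L^{2}(a,b,H)$ force $(u_{\varepsilon}',u_{\varepsilon})\to(u_{t},u)$ in $L^{1}(a,b)$. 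This yields
\begin{equation*}
\tfrac{1}{2}\|u(t)\|_{H}^{2}-\tfrac{1}{2}\|u(r)\|_{H}^{2}=\int_{r}^{t}(u_{t}(\sigma),u(\sigma))\,d\sigma,
\end{equation*}
and letting $a\downarrow0$, $b\uparrow T$ extends the identity to the full interval.

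Finally, by Cauchy--Schwarz the integrand obeys $|(u_{t}(\sigma),u(\sigma))|\leq\|u_{t}(\sigma)\|_{H}\|u(\sigma)\|_{H}$, which lies in $L^{1}(0,T)$ since both factors are in $L^{2}(0,T)$; hence $t\mapsto\|u(t)\|_{H}^{2}$ is the primitive of an $L^{1}$ function, therefore absolutely continuous, with $\tfrac{1}{2}\tfrac{d}{dt}\|u(t)\|_{H}^{2}=(u_{t}(t),u(t))$ for a.e.\ $t$. I expect the main obstacle to be the second step---upgrading the mere $L^{2}$ regularity of $u$ to a genuine continuous representative so that $\|u(t)\|_{H}$ acquires pointwise meaning---together with the careful treatment of the endpoints $0$ and $T$ in the mollification; once the continuous representative and the integral identity are secured, the absolute continuity and the differentiation formula follow routinely.
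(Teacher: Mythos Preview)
The paper does not supply a proof of this lemma at all; it merely quotes the result from Evans \cite[Theorem~3, p.~303]{Eva}. Your mollification argument is correct and is in fact the standard proof one finds in Evans: regularize $u$ in time, use the elementary chain rule for smooth $H$-valued curves, show the regularizations are Cauchy in $C([a,b],H)$ to obtain a continuous representative, and pass to the limit in the integral identity using the $L^{2}(a,b,H)$ convergence of $u_{\varepsilon}$ and $u_{\varepsilon}'$.

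One minor remark: what you actually establish is the absolute continuity of $t\mapsto\|u(t)\|_{H}^{2}$, which is precisely what the differentiation formula concerns and what the paper uses later (e.g.\ in Section~5). The lemma as stated asserts absolute continuity of $t\mapsto\|u(t)\|_{H}$ itself, and the square root of a nonnegative absolutely continuous function need not be absolutely continuous in general. This is a looseness in the statement of the lemma rather than a defect in your proof; the squared version is the correct and useful conclusion.
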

We recall the following classical interpolation inequality from [\cite{LT}, Lemma 8.2].
\begin{lemma}\label{lemin}
	If $1\leq p_{0}< p_{\theta}< p_{1}\leq \infty$, then 
	\begin{equation}\label{Intr}
	\|u\|_{p_{\theta}}\leq \|u\|_{p_{0}}^{1-\theta}\|u\|_{p_{1}}^{\theta},
	\end{equation}
	for all $u\in L^{p_{0}}(\Omega)\cap L^{p_{1}}(\Omega)$ with $\theta\in (0,1)$ defined by $\frac{1}{p_{\theta}}=\frac{1-\theta}{p_{0}}+\frac{\theta}{p_{1}}.$
\end{lemma}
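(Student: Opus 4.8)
The plan is to obtain the inequality \eqref{Intr} as a direct application of H\"older's inequality, distinguishing the two cases $p_{1}<\infty$ and $p_{1}=\infty$.

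First suppose $p_{1}<\infty$. I would use the pointwise factorization $|u|^{p_{\theta}}=|u|^{p_{\theta}(1-\theta)}\,|u|^{p_{\theta}\theta}$ and apply H\"older's inequality to the two factors with the conjugate exponents $q:=\frac{p_{0}}{p_{\theta}(1-\theta)}$ and $q':=\frac{p_{1}}{p_{\theta}\theta}$. The hypothesis $\frac{1}{p_{\theta}}=\frac{1-\theta}{p_{0}}+\frac{\theta}{p_{1}}$ is precisely the identity $\frac1q+\frac1{q'}=1$, and since $p_{0},p_{1}\ge 1$ and $\theta\in(0,1)$ one checks that $q,q'\in(1,\infty)$, so H\"older applies. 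This gives
\[
\int_{\Omega}|u|^{p_{\theta}}\,dx\;\le\;\Big(\int_{\Omega}|u|^{p_{0}}\,dx\Big)^{1/q}\Big(\int_{\Omega}|u|^{p_{1}}\,dx\Big)^{1/q'},
\]
and since $p_{0}/q=p_{\theta}(1-\theta)$ and $p_{1}/q'=p_{\theta}\theta$, raising both sides to the power $1/p_{\theta}$ yields $\|u\|_{p_{\theta}}\le\|u\|_{p_{0}}^{1-\theta}\|u\|_{p_{1}}^{\theta}$.

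For the endpoint case $p_{1}=\infty$, the defining relation degenerates to $p_{0}=p_{\theta}(1-\theta)$. Using $|u(x)|\le\|u\|_{\infty}$ for almost every $x$, I would estimate $\int_{\Omega}|u|^{p_{\theta}}\,dx=\int_{\Omega}|u|^{p_{0}}\,|u|^{p_{\theta}-p_{0}}\,dx\le\|u\|_{\infty}^{\,p_{\theta}-p_{0}}\,\|u\|_{p_{0}}^{p_{0}}$; since $p_{\theta}-p_{0}=p_{\theta}\theta$, taking the $p_{\theta}$-th root again produces the claimed bound.

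Since the whole argument reduces to H\"older's inequality, I do not expect any genuine obstacle: the only care needed is the bookkeeping of the exponents so that the conjugacy relation coincides with the hypothesis, together with the separate treatment of the endpoint $p_{1}=\infty$. It is also worth observing that the right-hand side of \eqref{Intr} is finite exactly because of the standing assumption $u\in L^{p_{0}}(\Omega)\cap L^{p_{1}}(\Omega)$, so the inequality is meaningful as stated.
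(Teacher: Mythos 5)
Your argument is correct: the Hölder factorization with conjugate exponents $q=\frac{p_{0}}{p_{\theta}(1-\theta)}$, $q'=\frac{p_{1}}{p_{\theta}\theta}$, together with the separate treatment of the endpoint $p_{1}=\infty$, is the standard proof of this interpolation inequality. The paper itself does not prove the lemma but simply quotes it from Tartar's lecture notes (Lemma 8.2 there), and your proof is precisely the classical argument behind that citation, so there is nothing to add.
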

The proof of the following Lemma is straightforward.
\begin{lemma}\label{le1}
	Let $\varrho $ be a positive number. Then the following inequality holds 
	$$ |log(s)| \leq \frac{1}{\varrho}s^{\varrho}, $$
	for all $s\in [1,+\infty)$.
\end{lemma}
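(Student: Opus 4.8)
The plan is to reduce the claimed bound to a single-variable monotonicity argument on $[1,+\infty)$. Since $s\ge 1$ forces $\log(s)\ge 0$, we have $|\log(s)|=\log(s)$, so it suffices to prove $\log(s)\le \frac{1}{\varrho}s^{\varrho}$ for every $s\ge 1$ and every fixed $\varrho>0$.

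First I would introduce the auxiliary function $f(s)=\frac{1}{\varrho}s^{\varrho}-\log(s)$ on $[1,+\infty)$ and compute $f'(s)=s^{\varrho-1}-s^{-1}=\frac{s^{\varrho}-1}{s}$. For $s\ge 1$ and $\varrho>0$ one has $s^{\varrho}\ge 1$, hence $f'(s)\ge 0$, so $f$ is nondecreasing on $[1,+\infty)$. Evaluating at the left endpoint gives $f(s)\ge f(1)=\frac{1}{\varrho}>0$, which yields $\log(s)\le \frac{1}{\varrho}s^{\varrho}-\frac{1}{\varrho}<\frac{1}{\varrho}s^{\varrho}$, and in particular the (non-strict) inequality stated in the lemma.

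Alternatively, and even more directly, I would invoke the elementary inequality $\log(x)\le x-1$ valid for all $x>0$; applying it with $x=s^{\varrho}$ gives $\varrho\log(s)=\log(s^{\varrho})\le s^{\varrho}-1\le s^{\varrho}$, and dividing by $\varrho>0$ finishes the proof.

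There is essentially no obstacle here: the statement is a standard calculus estimate, and the only point worth a line of comment is that the restriction $s\ge 1$ is exactly what removes the absolute value and keeps the sign of $f'(s)$ under control, so that the same bound as written cannot be expected for $0<s<1$. Accordingly I would keep the write-up to the two or three lines sketched above.
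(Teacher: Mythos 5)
Your proof is correct: both the monotonicity argument for $f(s)=\tfrac{1}{\varrho}s^{\varrho}-\log(s)$ and the shortcut via $\log(x)\le x-1$ with $x=s^{\varrho}$ are valid, and the paper itself omits any proof, simply calling the lemma straightforward. Your write-up is exactly the kind of elementary calculus argument the author had in mind, so there is nothing to add.
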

Throughout the paper, the letters $c$, $c_{i}$, $C$, $C_{i}$, $i=1, 2, \ldots, $ denote positive constants which vary from line to line, but are independent of terms that take part in any limit process. Furthermore, we will use these notations  
 $$ \|u\|_{p}=\|u\|_{L^{p}(\Omega)},\quad X_{0}=W_{0}^{s,p}(\Omega)\backslash \{0\}.$$
\subsection{Potential well in a variational stationary setting}
For simplicity, in this subsection, we consider the problem  $\eqref{eq1}$ in the stationary case. We point out that if we replace $u$ in this subsection by $u(t)$ for any $t\in [0, T)$, all the facts are still valid.\\
From lemmas \ref{lemin} and \ref{le1}, one  can check easily that the functionals $I$ and $E$ which have introduced in section $1$ are continuous on  $X_{0}$. Notice that 
\begin{equation}\label{u1}
E(u)=\frac{1}{p}I(u)+\frac{1}{p^{2}}\|u\|^{p}_{p}.
\end{equation}
Let $u\in X_{0}$ and let us consider the real function $j :\lambda \mapsto E(\lambda u)$ for $\lambda > 0$, defined by 
\begin{equation}\label{d4}
j(\lambda)=E(\lambda u)=\frac{\lambda^{p}}{p}[u]^{p}_{s,p}+\frac{\lambda^{p}}{p}\int_{\Omega} |u|^{p}\,dx-\frac {\lambda^{p}}{p}\int_{\Omega} |u|^{p}\log(|u|)\,dx-\frac{\lambda^{p}}{p}\log(\lambda)\int_{\Omega} |u|^{p}\,dx+\frac{\lambda^{p}}{p^{2}}\int_{\Omega} |u|^{p}\,dx
\end{equation}
Such maps are known as fibering maps which were introduced by Drabek and Pohozaev \cite{Dr}. The following lemma gives some properties of the real function $j(\lambda)$.
\begin{lemma}\label{le2}
	Let $u\in X_{0}$. Then we have 
	\begin{description}
		\item[1]$\lim_{\lambda\rightarrow 0^{+}}j(\lambda)=0$ and $\lim_{\lambda \rightarrow+\infty}j(\lambda)=-\infty$ 
		\item [2] there is a unique $\lambda^{*}=\lambda^{*}(u)> 0$ such that $j'(\lambda^{*})=0$.
		\item [3] $j(\lambda)$ is increasing on $(0,\lambda^{*})$, decreasing on $(\lambda^{*}, +\infty)$ and attains the maximum at $\lambda^{*}$.
		\item[4] $I(\lambda u)> 0$ for $0< \lambda < \lambda^{*}$, $I(\lambda u)< 0$ for $\lambda^{*}< \lambda < +\infty$ and $I(\lambda^{*}u)=0$.
	\end{description} 
\end{lemma}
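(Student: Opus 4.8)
The plan is to analyze the fibering map $j(\lambda)=E(\lambda u)$ defined in \eqref{d4} directly, extracting a clean factorization. First I would compute $j(\lambda)=\lambda^p\bigl(A - \tfrac{1}{p}\log\lambda\,\|u\|_p^p\bigr)$ where
$$
A=\frac{1}{p}[u]_{s,p}^p+\frac{1}{p}\|u\|_p^p-\frac{1}{p}\int_\Omega |u|^p\log|u|\,dx+\frac{1}{p^2}\|u\|_p^p
$$
collects all the $\lambda$-independent contributions. Then claim \textbf{1} is immediate: as $\lambda\to 0^+$ the factor $\lambda^p\log\lambda\to 0$ and $\lambda^p\to 0$, so $j(\lambda)\to 0$; as $\lambda\to+\infty$ the term $-\tfrac{1}{p}\lambda^p\log\lambda\,\|u\|_p^p$ dominates (recall $u\neq 0$ so $\|u\|_p>0$), forcing $j(\lambda)\to-\infty$. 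Here I would use Lemma~\ref{le1} or Lemma~\ref{lemin} only if needed to guarantee $\int_\Omega |u|^p\log|u|\,dx$ is finite, which makes $A$ a well-defined real number; the embedding $W_0^{s,p}(\Omega)\hookrightarrow L^r(\Omega)$ for $r$ slightly above $p$ together with $|{\log s}|\le \tfrac1\varrho s^\varrho$ does this.

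Next I would differentiate: a short computation gives
$$
j'(\lambda)=\lambda^{p-1}\Bigl(pA-\frac{1}{p}\|u\|_p^p-\log\lambda\,\|u\|_p^p\Bigr)
=\lambda^{p-1}\Bigl(I(\lambda u)\cdot\lambda^{-p}+\ \text{correction}\Bigr),
$$
but cleaner is to observe directly from \eqref{d3} that $I(\lambda u)=\lambda^p[u]_{s,p}^p+\lambda^p\|u\|_p^p-\lambda^p\int_\Omega|u|^p\log|u|\,dx-\lambda^p\log\lambda\,\|u\|_p^p$, so that $j'(\lambda)=\lambda^{-1}I(\lambda u)$ (one should double-check the constant; the identity $E(\lambda u)=\tfrac1p I(\lambda u)+\tfrac{1}{p^2}\|\lambda u\|_p^p$ from \eqref{u1} plus $\tfrac{d}{d\lambda}\tfrac{\lambda^p}{p^2}\|u\|_p^p=\tfrac{\lambda^{p-1}}{p}\|u\|_p^p$ pins it down, and the $\log\lambda$ derivative supplies the missing piece). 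In any case $j'(\lambda)$ has the sign of a factor of the form $c_1-c_2\log\lambda$ with $c_2=\|u\|_p^p>0$, which is strictly positive for small $\lambda$, vanishes at exactly one point $\lambda^*=\exp(c_1/c_2)$, and is strictly negative for $\lambda>\lambda^*$. This yields claims \textbf{2} and \textbf{3} at once: $j$ is strictly increasing on $(0,\lambda^*)$, strictly decreasing on $(\lambda^*,+\infty)$, and attains its maximum at the unique critical point $\lambda^*$.

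Finally, claim \textbf{4} follows from the relation between $j'$ and $I$. Since $j'(\lambda)=\lambda^{-1}I(\lambda u)$ and $\lambda>0$, the sign of $I(\lambda u)$ equals the sign of $j'(\lambda)$: positive on $(0,\lambda^*)$, negative on $(\lambda^*,+\infty)$, and zero at $\lambda^*$, giving $I(\lambda u)>0$ for $0<\lambda<\lambda^*$, $I(\lambda u)<0$ for $\lambda^*<\lambda<+\infty$, and $I(\lambda^*u)=0$. The only genuinely delicate point is verifying that $j'(\lambda)$ really is a single-signed-then-single-signed expression, i.e. that after differentiation the logarithmic term does not conspire with the polynomial terms to produce more than one zero; but because every non-logarithmic term in $j$ is a pure multiple of $\lambda^p$, after dividing by $\lambda^{p-1}$ one is left with an \emph{affine} function of $\log\lambda$, which has at most one root. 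That structural observation is the crux, and everything else is bookkeeping with the explicit formulas \eqref{d2}–\eqref{d4}.
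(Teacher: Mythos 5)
Your proposal is correct and follows essentially the same route as the paper: differentiate the fibering map, observe that $j'(\lambda)/\lambda^{p-1}$ is affine in $\log\lambda$ with negative slope $-\|u\|_p^p$, read off the unique critical point $\lambda^{*}=\exp\bigl(([u]_{s,p}^{p}+\|u\|_{p}^{p}-\int_{\Omega}|u|^{p}\log|u|\,dx)/\|u\|_{p}^{p}\bigr)$, and deduce statement 4 from the identity $I(\lambda u)=\lambda j'(\lambda)$ (your $j'(\lambda)=\lambda^{-1}I(\lambda u)$ is the same relation, and your constant check works out). The only cosmetic difference is that you spell out the finiteness of $\int_{\Omega}|u|^{p}\log|u|\,dx$ and the limit computations in statement 1, which the paper treats as immediate.
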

\begin{proof}
	For $u\in X_{0}$, by definition of $j(\lambda)=E(\lambda u)$, it is clear that the first statement holds due to $\|u\|_{p}\neq 0$. Now, by differentiating $j(\lambda)$ we obtain 
	\begin{equation}
	\frac{d}{d\lambda}j(\lambda)=\lambda^{p-1}\left([u]_{s,p}^{p}+\|u\|^{p}_{p}-\int_{\Omega}|u|^{p}\log(|u|)\,dx-\log(\lambda)\|u\|^{p}_{p}\right).
	\end{equation}
	Therefore, by taking 
	$$ \lambda^{*}=\lambda^{*}(u)=\exp\left(\frac{[u]_{s,p}^{p}+\|u\|^{p}_{p}-\int_{\Omega}|u|^{p}\log(|u|)\,dx}{\|u\|^{p}_{p}}\right),$$
	the second and third statements can be shown easily. In order to show the fourth statement,  one can check that 
	$$ I(\lambda u)=\lambda j'(\lambda).$$
	The proof is now complete.
\end{proof}
The following lemma gives some properties of the \textit{Nehari} functional $I$.
\begin{lemma}\label{le3}
	Let $u\in X_{0}$. The following statements hold :
	\begin{description}
		\item[1] $0< \|u\|_{p}	< R$ then $I(u)> 0$.
		\item [2] If $I(u)< 0$ then $\|u\|_{p}> R$.
		\item [3] If $I(u)=0$ then $\|u\|_{p}\geq R.$
	\end{description}
	where $R$ is constant given by 
	$$ R:=\left(\frac{1}{C(\frac{1}{2})}\right)^{\frac{1}{p(\gamma-1)}}.$$
\end{lemma}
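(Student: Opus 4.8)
The plan is to estimate the logarithmic term $\int_\Omega |u|^p\log(|u|)\,dx$ from above by an expression of the form $C\,\|u\|_p^{p(\gamma-1)}\,[u]_{s,p}^p$, so that it can be absorbed into the leading term $[u]_{s,p}^p$ of $I(u)$ once $\|u\|_p$ is small. The three assertions will then follow simply by comparing $\|u\|_p$ with the threshold $R$.

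\emph{The core estimate.} First I would split $\Omega$ into $\{x:|u(x)|<1\}$ and $\{x:|u(x)|\ge 1\}$. On the first set $|u|^p\log(|u|)\le 0$, so it may be discarded; on the second, Lemma~\ref{le1} with a parameter $\varrho>0$ gives $\log(|u|)\le\varrho^{-1}|u|^{\varrho}$, so that $\int_\Omega|u|^p\log(|u|)\,dx\le\varrho^{-1}\|u\|_{p+\varrho}^{p+\varrho}$. Next I would convert this $L^{p+\varrho}$-norm into $\|u\|_p$ and $[u]_{s,p}$. Since $N>sp$, the embedding $W_0^{s,p}(\Omega)\hookrightarrow L^{p_s^*}(\Omega)$ holds, say $\|v\|_{p_s^*}\le C_S[v]_{s,p}$, and (as $\Omega$ is bounded) every $u\in X_0$ lies in $L^p(\Omega)\cap L^{p_s^*}(\Omega)$, so Lemma~\ref{lemin} applies with endpoints $p$ and $p_s^*$. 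The exponent $\varrho$ is \emph{not} free: I would fix it — equivalently fix $\gamma:=(p+\varrho)/p>1$ — so that $p<p+\varrho<p_s^*$ and the interpolation exponent $\theta$ of Lemma~\ref{lemin} for the triple $(p,\,p\gamma,\,p_s^*)$ satisfies $\theta\,p\gamma=p$. A short computation shows this forces $\gamma=(N+sp)/N$ (hence $\varrho=sp^2/N$), and that this value indeed lies in the admissible range since $(N+sp)(N-sp)<N^2$. With this choice $(1-\theta)p\gamma=p(\gamma-1)$ and $\theta p\gamma=p$, so Lemma~\ref{lemin} and the embedding give $\|u\|_{p\gamma}^{p\gamma}\le C_S^p\,\|u\|_p^{p(\gamma-1)}\,[u]_{s,p}^p$, and hence
\[
\int_\Omega|u|^p\log(|u|)\,dx\ \le\ C(\tfrac12)\,\|u\|_p^{p(\gamma-1)}\,[u]_{s,p}^p,\qquad C(\tfrac12):=\tfrac{C_S^p}{\varrho},
\]
which is precisely the constant entering the definition of $R$.

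\emph{Reading off the conclusions.} Inserting the last bound into the definition of the Nehari functional yields
\[
I(u)\ \ge\ \big(1-C(\tfrac12)\,\|u\|_p^{p(\gamma-1)}\big)\,[u]_{s,p}^p+\|u\|_p^p.
\]
Observe that $u\in X_0$ forces $\|u\|_p>0$, for otherwise $u=0$ a.e.\ on $\Omega$, hence $u=0$ in $W_0^{s,p}(\Omega)$, contradicting $u\ne 0$. If $0<\|u\|_p<R$, then $C(\tfrac12)\|u\|_p^{p(\gamma-1)}<C(\tfrac12)R^{p(\gamma-1)}=1$, the bracket is positive, and $I(u)\ge\|u\|_p^p>0$, which is (1). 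If $\|u\|_p=R$ the bracket vanishes and still $I(u)\ge\|u\|_p^p=R^p>0$; thus $0<\|u\|_p\le R$ always forces $I(u)>0$. Contrapositively, $I(u)<0$ implies $\|u\|_p>R$, giving (2), and $I(u)=0$ implies $\|u\|_p>R$, in particular $\|u\|_p\ge R$, giving (3).

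\emph{Expected main obstacle.} There is no serious analytic difficulty; the one delicate point is the exponent bookkeeping in the core estimate. The parameter $\varrho$ (equivalently $\gamma$) must be chosen so that interpolation produces the power $[u]_{s,p}^p$ \emph{exactly} — with any other exponent on $[u]_{s,p}$ the logarithmic term could not be absorbed by the leading term of $I(u)$ uniformly in $[u]_{s,p}$ — while at the same time keeping $p<p+\varrho<p_s^*$ so that Lemma~\ref{lemin} is applicable with $\theta\in(0,1)$. This balancing is exactly where the standing hypothesis $N>sp$ is used.
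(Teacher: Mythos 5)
Your proof is correct and follows essentially the same route as the paper: split $\Omega$ into $\{|u|\le 1\}$ and $\{|u|>1\}$, bound the logarithm via Lemma \ref{le1}, interpolate the $L^{p+\varrho}$-norm between $L^{p}$ and $L^{p_{s}^{*}}$ using Lemma \ref{lemin} and the embedding $W_{0}^{s,p}(\Omega)\hookrightarrow L^{p_{s}^{*}}(\Omega)$, then absorb the logarithmic term into $[u]_{s,p}^{p}$ and read the three statements off the resulting lower bound for $I(u)$. The only deviation is that you fix the borderline value $\varrho=sp^{2}/N$ so that the exponent on $[u]_{s,p}$ is exactly $p$ and no Young inequality is needed, whereas the paper takes $0<\varrho<sp^{2}/N$ and applies Young's inequality with $\varepsilon=\tfrac{1}{2}$; this yields an explicit $\gamma=(N+sp)/N$ and a numerically different threshold $R$, but since only the existence of such a constant $R>0$ of this form is used here and later, the conclusion is unaffected.
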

\begin{proof}
	First, we observe that 
	\begin{eqnarray}\label{rrr1}
	\nonumber\int_{\Omega} |u(t)|^{p}\log(|u(t)|)\,dx&=&\int_{\Omega_{1}}|u(t)|^{p}\log(|u(t)|)\,dx+\int_{\Omega_{2}}|u(t)|^{p}\log(|u(t)|)\,dx\\\nonumber
	&\leq &\frac{1}{\varrho}\int_{\Omega_{2}}|u(t)|^{p+\varrho}\,dx\\
	&\leq & \|u(t)\|^{p+\varrho}_{p+\varrho},
	\end{eqnarray}
	where $\varrho$ is chosen sufficiently small such that $0< \varrho < \frac{sp^{2}}{N}$ and $\Omega_{1}:=\{x\in \Omega\,: |u(x,t)|\leq 1\}$ and $\Omega_{2}:=\{x\in \Omega\,: |u(x,t)|> 1\}$. The choice of $\varrho$ ensures that $p< p+\varrho< p^{*}_{s}$. Thus, Lemma \ref{lemin} combined with the continuous embedding  $ W_{0}^{s,p}(\Omega)\hookrightarrow  L^{p_{s}^{*}}(\Omega),$ yields 
	\begin{eqnarray}\label{III1}
	\nonumber\int_{\Omega} |u(t)|^{p}\log(|u(t)|)\,dx &\leq& C\|u(t)\|_{p_{s}^{*}}^{\theta(p+\varrho)}\|u(t)\|_{p}^{(1-\theta)(p+\varrho)}\\
	&\leq &C_{1} [u(t)]_{p,s}^{\theta(p+\varrho)}\|u(t)\|_{p}^{(1-\theta)(p+\varrho)}
	\end{eqnarray}
	where $\theta =\frac{s\varrho}{Np(p+\varrho)}\in (0,1)$. Since $0< \varrho< \frac{sp^{2}}{N}$, it follows that $\theta(p+\varrho)< p$. By using Young inequality for any $\varepsilon \in (0,1)$ we obtain 
	\begin{equation}\label{eg11}
	\int_{\Omega} |u|^{p}\log(|u|)\,dx\leq \varepsilon[u]^{p}_{s,p}+C(\varepsilon)\left(\|u\|^{p}_{p}\right)^{\gamma},
	\end{equation}
	where 
	\begin{equation}\label{cr}
	\gamma :=\frac{(1-\theta)(p+\varrho)}{p-\theta(p+\varrho)}> 1.
	\end{equation}
	Combining $\eqref{eg11}$ with the definition of $I$, yields
	\begin{eqnarray}
	\nonumber I(u)&=&[u]^{p}_{s,p}+\|u\|^{p}_{p}-\int_{\Omega} |u|^{p}\log (|u|)\,dx\\
	&\geq & (1-\varepsilon)[u]^{p}_{s,p}+\|u\|_{p}^{p}\left( 1-C(\varepsilon)\left(\|u\|_{p}^{p}\right)^{\gamma-1}\right), .
	\end{eqnarray}
	Taking $\varepsilon =\frac{1}{2}$ we get 
	\begin{equation}\label{a1}
	I(u)\geq \|u\|_{p}^{p}\left( 1-C\left(\frac{1}{2}\right)\left(\|u\|_{p}^{p}\right)^{\gamma-1}\right).
	\end{equation}
	If $0< \|u\|_{p}< R$, then it turns out from $\eqref{a1}$ that 
	$$ I(u)> 0.$$
	Now, assuming that $I(u)< 0$, again from   $\eqref{a1}$ it follows that 
	$$ 1-C\left(\frac{1}{2}\right)\left(\|u\|_{p}^{p}\right)^{\gamma-1}< 0.$$
	Hence 
	$$ \|u\|_{p}> \left(\frac{1}{C(\frac{1}{2})}\right)^{\frac{1}{p(\gamma-1)}}.$$
	The third statement can be shown from the first and the second statements. The proof is now complete.
\end{proof}
As a byproduct of the last lemmas is the following corollary.
\begin{corollary}
We have $\mathcal{N}\neq \emptyset $. Moreover, $E$ is coercive on $\mathcal{N}$.	
\end{corollary}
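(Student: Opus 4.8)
The plan is to deduce both assertions directly from Lemma~\ref{le2} and from the estimate~\eqref{eg11} that was already established inside the proof of Lemma~\ref{le3}; no new machinery is needed.

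For $\mathcal{N}\neq\emptyset$ I would argue as follows. Since $\Omega$ is a nonempty bounded domain, $X_{0}=W_{0}^{s,p}(\Omega)\setminus\{0\}$ is nonempty, so fix any $u\in X_{0}$. By item~2 of Lemma~\ref{le2} there is a (unique) $\lambda^{*}=\lambda^{*}(u)>0$ with $j'(\lambda^{*})=0$, and by item~4 of the same lemma this is equivalent to $I(\lambda^{*}u)=0$. Hence $\lambda^{*}u\in\mathcal{N}$ and $\mathcal{N}\neq\emptyset$.

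For coercivity of $E$ on $\mathcal{N}$, the starting point is the identity~\eqref{u1}, $E(u)=\tfrac1p I(u)+\tfrac{1}{p^{2}}\|u\|_{p}^{p}$, which on $\mathcal{N}$ (where $I(u)=0$) collapses to $E(u)=\tfrac{1}{p^{2}}\|u\|_{p}^{p}$. It then remains only to bound $\|u\|_{p}$ from below by the norm $\|u\|=[u]_{s,p}$ along $\mathcal{N}$. To do this I would insert $I(u)=0$, i.e. $[u]_{s,p}^{p}+\|u\|_{p}^{p}=\int_{\Omega}|u|^{p}\log(|u|)\,dx$, into~\eqref{eg11} taken with $\varepsilon=\tfrac12$; this gives $[u]_{s,p}^{p}+\|u\|_{p}^{p}\le\tfrac12[u]_{s,p}^{p}+C(\tfrac12)\,(\|u\|_{p}^{p})^{\gamma}$, hence $\tfrac12[u]_{s,p}^{p}\le C(\tfrac12)\,(\|u\|_{p}^{p})^{\gamma}$, and therefore $\|u\|_{p}^{p}\ge\big(2C(\tfrac12)\big)^{-1/\gamma}\,[u]_{s,p}^{p/\gamma}$. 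Substituting back into $E(u)=\tfrac{1}{p^{2}}\|u\|_{p}^{p}$ yields $E(u)\ge c\,\|u\|^{p/\gamma}$ with a constant $c>0$ independent of $u$, and since $p/\gamma>0$ this forces $E(u)\to+\infty$ as $\|u\|\to\infty$ with $u\in\mathcal{N}$; i.e. $E$ is coercive on $\mathcal{N}$. (As an immediate byproduct, $E(u)=\tfrac{1}{p^{2}}\|u\|_{p}^{p}\ge \tfrac{1}{p^{2}}R^{p}>0$ on $\mathcal{N}$ by item~3 of Lemma~\ref{le3}, so that $d>0$.)

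The argument is essentially bookkeeping. The only point one must respect is that the exponent $\gamma$ from~\eqref{cr} satisfies $\gamma>1$ — this is precisely what forbids absorbing $\|u\|_{p}^{p}$ into the right-hand side and keeps the chain of inequalities pointing in the useful direction — but this was already recorded in the proof of Lemma~\ref{le3}, so I anticipate no genuine obstacle.
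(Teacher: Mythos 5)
Your proposal is correct and follows essentially the same route as the paper: non-emptiness of $\mathcal{N}$ via the fibering map of Lemma~\ref{le2}, and coercivity by combining $E(u)=\tfrac{1}{p^{2}}\|u\|_{p}^{p}$ from \eqref{u1} with the logarithmic estimate \eqref{eg11} applied to the identity $I(u)=0$ to bound $\|u\|_{p}^{p}$ from below by a positive power of $[u]_{s,p}^{p}$. The only cosmetic difference is that you fix $\varepsilon=\tfrac12$ while the paper keeps a general $\varepsilon\in(0,1)$; the argument is otherwise identical.
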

\begin{proof}
From Lemma \ref{le2}, it is clear that $\mathcal{N}$ is not empty. Now we claim that $E$ is coercive on $\mathcal{N}$. Indeed, assuming that $u\in \mathcal{N}$. By $\eqref{u1}$ we have   
\begin{equation}
E(u)=\frac{1}{p^{2}}\|u\|_{p}^{p}
\end{equation}
On the other hand, the definition of $\mathcal{N}$ combined with $\eqref{eg11}$ gives  
$$[u]^{p}_{s,p}+\|u\|_{p}^{p}=\int_{\Omega} |u|^{p}\log(|u|)\,dx\leq \varepsilon [u]_{s,p}^{p}+C(\varepsilon)\left(\|u\|^{p}\right)^{\gamma},\;\forall \varepsilon >0. $$
This implies 
$$ \|u\|_{p}^{p}\geq \left(\frac{1-\varepsilon}{C(\varepsilon)}\right)^{1/\gamma}\left([u]_{s,p}^{p}\right)^{1/\gamma}.$$
Therefore, $E$ is coercive on $\mathcal{N}$.
\end{proof}
In what follows we show that the infimum in $\eqref{V1}$
is attained by some $u\in \mathcal{N}$ which nontrivial critical point of $E$ and therefore the stationary problem associated with $\eqref{eq1}$ admits a ground state solution.
\begin{lemma}
	The following statements hold :
	\begin{description}
		\item[1] $d=\inf_{u\in X_{0}}\sup_{\lambda > 0}E(\lambda u)$.
		\item[2] $d$ has a postive lower bound, namely,
		\begin{equation}
		d \geq M\;\;\text{with}\; M=\frac{R^{p}}{p^{2}}.
		\end{equation}
		\item[3] There exist an extremal of the variation problem $\eqref{V1}$. More precisely, there is a function $u\in \mathcal{N}$ such that $E(u)=d$.
	\end{description}
\end{lemma}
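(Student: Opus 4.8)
The plan is to establish the three assertions in order, since each one feeds the next. For (1), I would use the fibering-map analysis of Lemma \ref{le2}: for every $u\in X_0$ the map $\lambda\mapsto j(\lambda)=E(\lambda u)$ attains its maximum at the unique $\lambda^*=\lambda^*(u)$, and $\lambda^* u\in\mathcal N$ because $I(\lambda^* u)=\lambda^* j'(\lambda^*)=0$. Hence $\sup_{\lambda>0}E(\lambda u)=E(\lambda^* u)\geq d$, giving $\inf_{u\in X_0}\sup_{\lambda>0}E(\lambda u)\geq d$. Conversely, if $u\in\mathcal N$ then $\lambda^*(u)=1$ by uniqueness, so $\sup_{\lambda>0}E(\lambda u)=E(u)$, and taking the infimum over $\mathcal N$ yields the reverse inequality. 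This proves (1).

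For (2), let $u\in\mathcal N$, so $I(u)=0$. By Lemma \ref{le3}(3) we have $\|u\|_p\geq R$. On the other hand, identity \eqref{u1} together with $I(u)=0$ gives $E(u)=\frac{1}{p^2}\|u\|_p^p\geq \frac{R^p}{p^2}=M$. Taking the infimum over $\mathcal N$ gives $d\geq M>0$, which is (2). (This also re-derives the coercivity already noted in the preceding corollary.)

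For (3), I would run the direct method. Take a minimizing sequence $(u_n)\subset\mathcal N$ with $E(u_n)\to d$. Since $E(u_n)=\frac{1}{p^2}\|u_n\|_p^p$ is bounded, $(u_n)$ is bounded in $L^p(\Omega)$; and from $I(u_n)=0$ together with the Young-type estimate \eqref{eg11} (with $\varepsilon=\tfrac12$, say) one gets $(1-\varepsilon)[u_n]_{s,p}^p\le C(\varepsilon)(\|u_n\|_p^p)^\gamma$, hence $(u_n)$ is bounded in $X_0=W^{s,p}_0(\Omega)$. By reflexivity and the compact embedding $W^{s,p}_0(\Omega)\hookrightarrow\hookrightarrow L^r(\Omega)$ for $1\le r<p_s^*$ (in particular for $r=p$ and $r=p+\varrho$), pass to a subsequence with $u_n\rightharpoonup u$ in $W^{s,p}_0(\Omega)$, $u_n\to u$ strongly in $L^p$ and in $L^{p+\varrho}$, and $u_n\to u$ a.e. The strong convergences, together with the domination $|u|^p|\log|u||\le \tfrac1\varrho|u|^{p+\varrho}$ on $\Omega_2$ and boundedness of $t^p|\log t|$ on $(0,1]$, let me pass to the limit in the logarithmic term; weak lower semicontinuity of the Gagliardo seminorm gives $[u]_{s,p}^p\le\liminf[u_n]_{s,p}^p$. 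So $I(u)\le\liminf I(u_n)=0$ and $E(u)\le\liminf E(u_n)=d$. I must rule out $u=0$: since $u_n\in\mathcal N$, Lemma \ref{le3}(3) gives $\|u_n\|_p\ge R$, and strong $L^p$-convergence forces $\|u\|_p\ge R>0$, so $u\in X_0$. If $I(u)=0$, then $u\in\mathcal N$ and $E(u)\ge d$, hence $E(u)=d$ and we are done. If instead $I(u)<0$, take $\lambda^*=\lambda^*(u)\in(0,1)$ (by Lemma \ref{le2}(4), since $I(u)<0$ forces $\lambda^*<1$); then $\lambda^* u\in\mathcal N$, so $d\le E(\lambda^* u)=j(\lambda^*)$, while strict monotonicity of $j$ on $(0,\lambda^*)$ and the fact that $j(1)\le d$ would be contradicted — more precisely, $E(\lambda^* u)=\frac{1}{p^2}\|\lambda^* u\|_p^p=(\lambda^*)^p\frac{1}{p^2}\|u\|_p^p<\frac{1}{p^2}\|u\|_p^p$, and one checks $\frac{1}{p^2}\|u\|_p^p=E(u)-\frac1p I(u)\le d-\frac1p I(u)$; combining, $d\le E(\lambda^* u)<\frac{1}{p^2}\|u\|_p^p\le d-\frac1p I(u)$, so $I(u)>0$, contradicting $I(u)<0$. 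Hence the case $I(u)<0$ cannot occur, and the minimizer lies in $\mathcal N$.

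The main obstacle is the compactness step in (3): passing to the limit in the nonconvex logarithmic functional and, above all, excluding $I(u)<0$ in the limit (equivalently, showing no loss of "mass" into the constraint). The fibering-map projection $u\mapsto\lambda^*(u)u$ onto $\mathcal N$ combined with the sign analysis of Lemma \ref{le2} is exactly what closes this gap; the rest is routine weak-lower-semicontinuity and the uniform integrability furnished by Lemma \ref{le1} and the subcritical exponent $p+\varrho<p_s^*$.
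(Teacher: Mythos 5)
Your treatment of statements (1) and (2) is correct and essentially the paper's argument (fibering map, projection $\lambda^*(u)u\in\mathcal N$, and $E(u)=\frac{1}{p^2}\|u\|_p^p\geq\frac{R^p}{p^2}$ on $\mathcal N$ via Lemma \ref{le3}). In (3), the compactness setup, the passage to the limit in the logarithmic term (your domination/uniform-integrability route via $|u|^p|\log|u||\leq C+\frac{1}{\varrho}|u|^{p+\varrho}$ and strong $L^{p+\varrho}$ convergence is a legitimate, even simpler, alternative to the paper's use of Lions' lemma and weak $L^{p/(p-1)}$ convergence), and the exclusion of $u=0$ via $\|u_n\|_p\geq R$ are all fine.

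The genuine gap is your final step ruling out $I(u)<0$: the chain you derive, $d\leq E(\lambda^* u)<\frac{1}{p^2}\|u\|_p^p\leq d-\frac{1}{p}I(u)$, only yields $-\frac{1}{p}I(u)>0$, i.e.\ $I(u)<0$ again — this is consistent with the assumption, not a contradiction, so as written the argument collapses (you made a sign slip in concluding $I(u)>0$). The way to close it — and this is what the paper does — is to exploit that the minimizing sequence lies on $\mathcal N$ and converges strongly in $L^p$: since $E(u_n)=\frac{1}{p^2}\|u_n\|_p^p\to d$ and $\|u_n\|_p\to\|u\|_p$, one has $\frac{1}{p^2}\|u\|_p^p=d$ exactly. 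Then, with $\lambda^*=\lambda^*(u)\in(0,1)$ (which indeed follows from $I(u)<0$ and Lemma \ref{le2}) and $\lambda^*u\in\mathcal N$, one gets $d\leq E(\lambda^* u)=\frac{(\lambda^*)^p}{p^2}\|u\|_p^p=(\lambda^*)^p d<d$, using $d\geq M>0$ from part (2); this is the contradiction. Replace your estimate $\frac{1}{p^2}\|u\|_p^p\leq d-\frac{1}{p}I(u)$ by this identity and the proof is complete and coincides with the paper's.
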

\begin{proof}
	Let $u\in X_{0}$, according to Lemma \ref{le2}, we have 
	\begin{equation}\label{x2}
	\sup_{\lambda >0}E(\lambda u)=E(\lambda^{*}u)=\frac{1}{p}I(\lambda^{*}u)+\frac{1}{p^{2}}\|\lambda ^{*}u\|_{p}^{p}=\frac{1}{p^{2}}\|\lambda ^{*}u\|_{p}^{p}.
	\end{equation}
	From the definition of $\mathcal{N}$ and Lemma \ref{le2}, we have $\lambda^{*}u\in \mathcal{N}$. Thus 
	\begin{equation}\label{x1}
	E(\lambda^{*}u)\geq d=\inf_{u\in \mathcal{N}} E(u).
	\end{equation}
Combining $\eqref{x2}$ and $\eqref{x1}$ we obtain  
	\begin{equation}\label{x4}
	\inf_{u\in X_{0}}\sup_{\lambda > 0}E(\lambda u)\geq d.
	\end{equation}
	On the other hand, if $u\in \mathcal{N}$ then by using $\eqref{d4}$ we get the only critical point in $(0,+\infty) $ of the mapping $j(\lambda)$ is $\lambda^{*}=1$. Thus 
	$$ \sup_{\lambda >0}E(\lambda u)=E(u),$$
	for each $u\in \mathcal{N}$. Hence
	\begin{equation}\label{x3}
	\inf_{u\in X_{0}}\sup_{\lambda > 0}E(\lambda u)\leq \inf_{u\in \mathcal{N}}\sup_{\lambda > 0}E(\lambda u)=\inf_{u\in \mathcal{N}} E(u)=d.
	\end{equation} 
	Thereby, the first statement follows from $\eqref{x4}$ and $\eqref{x3}$.
	From Lemma \ref{le2}, we have $I(\lambda^{*}u)=0$. This implies 
	\begin{equation}\label{x5}
	\|\lambda^{*}u\|_{p}\geq R=\left(\frac{1}{C(\frac{1}{2})}\right)^{\frac{1}{p(\gamma-1)}}
	\end{equation}
by Lemma $\eqref{le3}$. The last inequality combined with $\eqref{x2}$, yields 
	$$ \sup_{\lambda >0} E(\lambda u)\geq \frac{R^{p}}{p^{2}}=M.$$
	Thus, it turns out that $d\geq M$.\\
	In order to show the third statement. Let $\{u_{k}\}^{\infty}_{k=1}\subset \mathcal{N}$ be a minimizing sequence for $E$ such that 
	$$ \lim_{k\rightarrow \infty}E(u_{k})=d.$$
	On the other hand, we have already shown that $E$ is coercive on $\mathcal{N}$. Thus $ \{u_{k}\}^{\infty}_{k=1}$ is bounded in $W_{0}^{s,p}(\Omega)$. Since $W_{0}^{s,p}(\Omega)\hookrightarrow L^{p}(\Omega)$ is compact embedding, there exists a function $u$ and a subsequence of $ \{u_{k}\}^{\infty}_{k=1}$, still denoted by $\{u_{k}\}^{\infty}_{k=1}$, such that 
	\begin{equation}\label{Cn}
	\left\{\begin{array}{lll}
	 u_{k}\rightharpoonup u &\text{in} & W_{0}^{s,p}(\Omega),\\
	 u_{k}\rightarrow u &\text{in}& L^{p}(\Omega),\\
	 u_{k}\rightarrow u &\text{a.e. in }&\Omega.
	\end{array}\right.
	\end{equation}
Now we claim that 
	\begin{equation}\label{Cla}
	\lim_{k\rightarrow \infty}\int_{\Omega} |u_{k}|^{p}\log(|u_{k}|)\,dx= \int_{\Omega} |u|^{p}\log(|u|)\,dx.
	\end{equation}
	Indeed,	from $\eqref{Cn}$ clearly this implies 
		\begin{equation}\label{z77}
		|u_{k}|^{p-2}u_{k}\log(|u_{k}|)\rightarrow |u|^{p-2}u\log(|u|),\;\; a.e\; x\in \Omega.
		\end{equation}
		Taking $\varrho=\frac{sp(p-1)}{N-sp}$ in lemma \ref{le1} and by a straightforward competition we have 		
		\begin{eqnarray}\label{z88}
		\nonumber\int_{\Omega} \left| |u_{k}|^{p-2}u_{k}\log(|u_{k}|)\right|^{\frac{p}{p-1}}\,dx&=&\int_{|u_{k}|\leq 1} \left| |u_{k}|^{p-2}u_{k}\log(|u_{k}|)\right|^{\frac{p}{p-1}}\,dx+\int_{|u_{k}|>1} \left| |u_{k}|^{p-2}u_{k}\log(|u_{k}|)\right|^{\frac{p}{p-1}}\,dx,\\\nonumber
		&\leq& c|\Omega|+C\int_{\Omega_{2}} |u_{k}|^{p_{s}^{*}}\,dx,\\
		&\leq & c|\Omega|+C_{1}[u_{k}]_{s,p}\leq C.
		\end{eqnarray}	
	Here we have used the continuous embedding $W_{0}^{s,p} (\Omega)\hookrightarrow L^{p_{s}^{*}}(\Omega).$ 
	Using \cite[Lemma 1.3, p. 12]{L} we conclude that 
	$$ |u_{k}|^{p-2}u_{k}\log(|u_{k}|)\rightarrow |u|^{p-2}u\log(|u|)\;\text{weakly in}\; L^{\frac{p}{p-1}}(\Omega).$$
	On the other hand, we have 
	\begin{eqnarray*}
		&&\left| \int_{\Omega} |u_{k}|^{p}\log(|u_{k}|)\,dx - \int_{\Omega} |u|^{p}\log(|u|)\,dx\right|\\
		&\leq &\left|\int_{\Omega} (u_{k}-u)|u_{k}|^{p-2}u_{k}\log(|u_{k}|)\,dx\right|+\left|\int_{\Omega} u\left[|u_{k}|^{p-2}u_{k}\log(|u_{k}|)-|u|^{p-2}u\log(|u|)\right]\,dx\right|\\
		&\leq &
		C\|u_{k}-u\|_{p}+\left|\int_{\Omega} u\left(|u_{k}|^{p-2}u_{k}\log(|u_{k}|)-|u|^{p-2}u\log(|u|)\right)\,dx\right|\rightarrow 0 \;\text{as}\; k\rightarrow \infty.
	\end{eqnarray*}
	Therefore, the claim holds. Using the weak lower semicontinuity of the norm on $W_{0}^{s, p}(\Omega)$, we deduce
	\begin{eqnarray*}
		E(u)&=&\frac{1}{p}[u]_{s,p}^{p}+\frac{1}{p}\|u\|^{p}_{p}-\int_{\Omega}|u|^{p}\log(|u|)\,dx+\frac{1}{p^{2}}\|u\|^{p}_{p}\\
		&\leq & \liminf_{k\rightarrow\infty}\left(\frac{1}{p}[u_{k}]_{s,p}^{p}+\frac{1}{p}\|u_{k}\|^{p}_{p}-\int_{\Omega}|u_{k}|^{p}\log(|u_{k}|)\,dx+\frac{1}{p^{2}}\|u_{k}\|^{p}_{p} \right)\\
		&=&\liminf_{k\rightarrow\infty} E(u_{k})=d.
	\end{eqnarray*}
	Thanks to $u_{k}\in \mathcal{N}$ we have $u_{k}\in X_{0}$ and $I(u_{k})=0$, then by Lemma \ref{le3} we get 
	$$ \|u_{k}\|_{p}\geq R.$$
	Hence, by strong convergence in $L^{p}(\Omega)$ it turns out that $\|u\|_{p}\neq 0$, thus $u\in X_{0}$. Furthermore, the weak lower semicontinuity of the norm on $W_{0}^{s, p}(\Omega)$ ensures that 
	\begin{eqnarray*}
		I(u)&=&[u]_{s,p}^{p}+\|u\|^{p}_{p}-\int_{\Omega} |u|^{p}\log(|u|)\,dx \\
		&\leq& \liminf_{k\rightarrow \infty}\left([u_{k}]_{s,p}^{p}+\|u_{k}\|^{p}_{p}-\int_{\Omega} |u_{k}|^{p}\log(|u_{k}|)\,dx \right)\\
		&=& \liminf_{k\rightarrow \infty} I(u_{k})=0
	\end{eqnarray*}
	It remains to show that $I(u)=0$. Arguing by contradiction, if this is not true then we have $I(u)< 0$. By Lemma \ref{le2}, there exists a positive constant $\lambda^{*}$ such that  
	$$ \lambda^{*}=\lambda^{*}(u)=\exp\left(\frac{[u]_{s,p}^{p}+\|u\|^{p}-\int_{\Omega} |u|^{p}\log(|u|)\,dx}{\|u\|^{p}}\right)< 1$$
	and satisfying $I(\lambda^{*}u)=0$. Therefore, by definition of $d$ we obtain 
	$$ 0< d\leq E(\lambda^{*}u)=\frac{1}{p^{2}}\|\lambda^{*}u\|_{p}^{p}\leq \frac{(\lambda^{*})^{p}}{p^{2}}\liminf_{k\rightarrow \infty}\|u_{k}\|_{p}^{p}=(\lambda^{*})^{p}\liminf_{k\rightarrow \infty}E(u_{k})=(\lambda^{*})^{p}d< d,$$
	but this is a contradiction. Thus, the proof is now complete.	
\end{proof}
\section{Proof of theorem \ref{th1}}
In this section, we prove the local existence of weak solutions for problem $\eqref{eq1}$. The proof will be done by using the Galerkin approximation with compactness methods. Therefore, we divide the proof into its naturally arising steps. 
\begin{description}
		\item[Step 1] From the continuous emebding $W_{p}^{s, p}(\Omega)\hookrightarrow L^{p_{s}^{*}}(\Omega)$  we have the Gelfand triple 
		$$ W^{s,p}_{0}(\Omega)\hookrightarrow^{c,d} L^{2}(\Omega)\hookrightarrow^{c,d} (W^{s,p}_{0}(\Omega))^{*}.$$
		
		Here, $\hookrightarrow^{c,d} $ denotes a dense and compact embedding. Let $\{V_{m}\}_{m\in \mathbb{N}}$  be a Galerkin scheme of the separable  Banach space  $W_{0}^{s,p}(\Omega)$, i.e, 
		\begin{equation}
		V_{m}=Span\{\varphi_{1}, \varphi_{2}, \ldots, \varphi_{m}\},\;\; \overline{\bigcup_{m\in \mathbb{N}}V_{m}}^{W_{0}^{s,p}(\Omega)}=W_{0}^{s,p}(\Omega), 
		\end{equation}
		with $\{\varphi_{j}\}_{j=1}^{\infty}$ is an orthonormal basis in $L^{2}(\Omega)$ . Without loss of generality, we assume that for each $m\in \mathbb{N}$ the space $V_{m}$ is a subset of $W^{s+\eta(p-1),p}_{0}(\Omega)$ (note that $C^{\infty}_{0}(\Omega)$ is dense in $W^{s,p}_{0}(\Omega)$ ) for $\eta > 0$ chosen later in the proof. Let $u_{0}\in X_{0}$ then we can find $u_{0m}\in V_{m}$ such that
		\begin{equation}\label{CVV}
		u_{m}(0)=u_{0m}\rightarrow u_{0}\;\;\text{strongly in }\; W_{0}^{s,p}(\Omega)\;\text{as}\; m\rightarrow \infty.
		\end{equation}
	For each $m$, we look for the approximate solutions $ u_{m}(x,t)=\sum_{j=1}^{m}g_{jm}(t)\varphi_{j}(x)$ satisfying the following identities :
		 \begin{equation}\label{g1}
		\int_{\Omega} u_{mt}(t)\varphi_{i}\,dx+K^{s,p}(u_{m}(t), \varphi_{i})+\int_{\Omega} |u_{m}(t)|^{p-2}u_{m}(t)\varphi_{i}\,dx=\int_{\Omega} |u_{m}(t)|^{p-2}u_{m}(t)\log(|u_{m}(t)|)\varphi_{i}\,dx,
		\end{equation}
		with the initial conditions
		\begin{equation}\label{g2}
		u_{m}(0)=u_{0m},
		\end{equation}
		Then $\eqref{g1}-\eqref{g2}$ is equivalent to the following initial value problem for a system of nonlinear ordinary differential equations on $g_{im}$ : 
		\begin{equation}\label{sys}
		\left\{
		\begin{array}{l}
		g'_{im}(t)=F_{i}(g(t)),\;i=1,2, \ldots, m,\;t\in[0,t_{0}],\\
		g_{im}(0)=a_{im},, \;\;i=1,2,\ldots, m,
		\end{array}\right.
		\end{equation}
		where $F_{i}(g(t))=-K^{s,p}(u_{m}(t), \varphi_{i})-\int_{\Omega} |u_{m}(t)|^{p-2}u_{m}(t)\varphi_{i}\,dx+\int_{\Omega} |u_{m}(t)|^{p}\log(|u_{m}(t)|)\varphi_{i}\,dx.$
		By the Picard iteration method, there is $t_{0,m}> 0$ depending on $|a_{im}|$ such that problem $\eqref{sys}$ admits a unique local solution $g_{im}\in C^{1}([0,t_{0,m}])$.
\item[Step 2] Multiplying the $i^{th}$ equation in $\eqref{g1}$ by $g_{im}(t)$ and summing over $i$ from $1$ to $m$, we obtain  
		\begin{equation}\label{g3}
		\frac{1}{2}\frac{d}{dt}\|u_{m}(t)\|_{2}^{2}+[u_{m}(t)]_{s,p}^{p}+\|u_{m}(t)\|^{p}_{p}=\int_{\Omega} |u_{m}(t)|^{p}\log(|u_{m}(t)|)\,dx.
		\end{equation}
		On the other hand, from $\eqref{rrr1}$ we have 
		\begin{eqnarray}\label{r1}
		\nonumber\int_{\Omega} |u_{m}(t)|^{p}\log(|u_{m}(t)|)\,dx&\leq&
		\frac{1}{\varrho} \|u_{m}(t)\|^{p+\varrho}_{p+\varrho}, \;\,\forall t\in [0, t_{0,m}].
		\end{eqnarray}
		where $\varrho$ is chosen sufficiently small such that $0< \varrho< \frac{2sp}{N}$. Since $p<\varrho+p< p^{*}_{s} $ and using Lemma \ref{lemin} 
		with the continuous embedding $ W_{0}^{s,p}(\Omega)\hookrightarrow  L^{p_{s}^{*}}(\Omega),$
		we deduce 
		\begin{eqnarray}\label{I1}
		\nonumber\int_{\Omega} |u_{m}(t)|^{p}\log(|u_{m}(t)|)\,dx &\leq& C\|u_{m}(t)\|_{p_{s}^{*}}^{\theta(p+\varrho)}\|u_{m}(t)\|_{2}^{(1-\theta)(p+\varrho)}\\
		&\leq &C_{1} [u_{m}(t)]_{s,p}^{\theta(p+\varrho)}\|u_{m}(t)\|_{2}^{(1-\theta)(p+\varrho)}, \;\,\forall t\in [0, t_{0,m}].
		\end{eqnarray}
		where $\theta \in (0,1)$ satisfies 
		$$ \frac{1}{p+\varrho}=\frac{\theta}{p^{*}_{s}}+\frac{1-\theta}{2},$$
		The above choice of  $\varrho$ ensures that  $\theta(p+\varrho)< p$. For any $\varepsilon \in (0,1)$, the Young inequality yields   
		\begin{eqnarray}\label{I2}
		\int_{\Omega} |u_{m}(t)|^{p}\log(|u_{m}(t)|)\,dx &\leq& \varepsilon [u_{m}(t)]_{s,p}^{p}+C_{\varepsilon}\left(\|u_{m}(t)\|_{2}^{2}\right)^{\gamma},\;\forall t\in [0, t_{0,m}]
		\end{eqnarray}
		where $\gamma =\frac{p(1-\theta)(p+\varrho)}{2[p-\theta(p+\rho)]}> 1.$ Combining 
		$\eqref{g3}$ with $\eqref{I2}$ we get 
		\begin{equation}\label{ggg3}
		\frac{1}{2}\frac{d}{dt}\|u_{m}(t)\|_{2}^{2}+(1-\varepsilon)[u_{m}(t)]_{s,p}^{p}+\|u_{m}(t)\|^{p}_{p}\leq C_{\varepsilon}\left(\|u_{m}(t)\|_{2}^{2}\right)^{\gamma}.
		\end{equation}
		Taking $\varepsilon=1/2$ in $\eqref{ggg3}$ we conclude that 
		\begin{equation*}
		\frac{d}{dt}\|u_{m}(t)\|_{2}^{2}+[u_{m}(t)]_{s,p}^{p}+\|u_{m}(t)\|^{p}_{p}\leq C_{2}\left(\|u_{m}(t)\|_{2}^{2}\right)^{\gamma}.
		\end{equation*}
		Since $\gamma >1$, the above inequality yields 
		$$\|u_{m}(t)\|^{2}_{2}\leq \left[C_{3}^{(1-\gamma)}-C_{2}(\gamma-1)t\right]^{\frac{1}{1-\gamma}},$$
		only if $t<T_{0}=\frac{C^{(1-\gamma)}_{3}}{C_{2}(\gamma-1)}$, where $C_{3}=\sup\limits_{m\in \mathbb{N}^{*}}\|u_{0m}\|^{2}_{2}$. It follows that 
		\begin{equation*}\label{ety}
		\|u_{m}(t)\|^{2}_{2}\leq 2^{\frac{1}{\gamma-1}}C_{3},\;\;\forall t\leq \min\left\{t_{0,m}, T_{0}/2\right\}.
		\end{equation*}
		Therefore 
		\begin{equation*}\label{ety}
		\|u_{m}(t_{0,m})\|^{2}_{2}\leq 2^{\frac{1}{\gamma-1}}\left(C_{3}+1\right).
		\end{equation*}
		Thus, we can replace $u_{0m}$ in $\eqref{sys}$ by $u_{m}(x,t_{0,m})$ and extend the solution to the interval $[0,T_{0}/2]$ by repeating the above process. We deduce 
		\begin{equation}\label{EQQ}
		\|u_{m}(t)\|_{2}\leq 2^{\frac{1}{\gamma-1}}C_{3}, \;\forall t\in [0, T_{*}],\; \Big(T_{*}=T_{0}/2\Big).
		\end{equation}
		Inserting $\eqref{EQQ}$ into $\eqref{I2}$, we have for any $\varepsilon \in (0, 1)$
		\begin{eqnarray}\label{III2}
		\int_{\Omega} |u_{m}(t)|^{p}\log(|u_{m}(t)|)\,dx &\leq& \varepsilon [u_{m}(t)]_{s,p}^{p}+2^{\frac{2\gamma}{\gamma-1}}C_{\varepsilon}C^{2\gamma}_{3},\;\forall t\in [0, T_{*}].
		\end{eqnarray}
	 We next multiply both sides of $\eqref{g1}$ by $g'_{im}(t)$, take the sum over $i\in \{1,2,\ldots,m\}$, and afterwards integrate over $(0,t)$ yields
		\begin{equation}\label{y1}
		\int_{0}^{t}\|u_{ms}(s)\|_{2}^{2}\,ds+E(u_{m}(t))=E(u_{m}(0))
		\end{equation}
		Notice that by $\eqref{CVV}$ and the continuity of $E$ there exists a postive constant $C$ such that 
		\begin{equation}\label{y2}
		E(u_{m}(0))\leq C,\;\;\text{for all}\; m. 
		\end{equation}
	Combining the definition of $E$ with $\eqref{III2}$ we obtain 
		\begin{equation}\label{y3}
		E(u_{m}(t))\geq \frac{1-\varepsilon}{p}[u_{m}(t)]_{s,p}^{p}+\frac{1}{p}\|u_{m}(t)\|_{p}^{p}+\frac{1}{p^{2}}\|u_{m}(t)\|_{p}^{p}-\frac{2^{\frac{2\gamma}{\gamma-1}}C_{\varepsilon}C^{2\gamma}_{3}}{p}.
		\end{equation}
		From $\eqref{y1}-\eqref{y3}$, it follows that 
		\begin{equation}\label{y4}
		\|u_{m}\|_{L^{\infty}(0,T_{*},W_{0}^{s,p}(\Omega))}\leq C, 
		\end{equation}
		and 
		\begin{equation}\label{y5}
		\|u_{mt}\|_{L^{2}(0,T_{*},L^{2}(\Omega)}\leq C.
		\end{equation}
		\item[Step 3]
		Combining a priori estimates $\eqref{y4}$ and $\eqref{y5}$ we get the existence of a function $u$ and a subsequence of $\{u_{m}\}^{\infty}_{m=1}$ still denoted by $\{u_{m}\}^{\infty}_{m=1}$ such that 
		\begin{equation}\label{z1}
		u_{m}\rightarrow u \;\;\text{weakly}^{*}\; \text{in} \;\;L^{\infty}(0,T_{*},W_{0}^{s,p}(\Omega)),
		\end{equation}
		\begin{equation}\label{z2}
		u_{mt}\rightarrow u_{t} \;\;\text{weakly in} \;\;L^{2}(0,T_{*},L^{2}(\Omega)),
		\end{equation}
		\begin{equation}\label{z3}
		(-\Delta)^{s}_{p}u_{m}\rightarrow \chi \;\;\text{weakly}^{*}\;\text{ in} \;\;L^{\infty}(0,T_{*},W^{-s,p'}(\Omega))
		\end{equation}
		Since $\{u_{m}\}^{\infty}_{m=1}\subset L^{\infty}(0,T_{*},W_{0}^{s,p}(\Omega))$ and $\{u_{mt}\}^{\infty}_{m=1 }\subset L^{2}(0,T_{*},L^{2}(\Omega))$, Aubin-Lions compacteness theorem  \cite[Theorem 5.1, p. 58]{L} implies that up to a subsequence,
		\begin{equation}\label{z4}
		u_{m}\rightarrow u \;\text{stronly in }\; C([0,T_{*}], L^{r}(\Omega)),\;\;\forall r\in [2,p_{s}^{*}).
		\end{equation}
		Let $0< \eta < \min\left(s, \frac{1-s}{p-1}\right)$ be chosen as in Lemma \ref{lem1}, then by using Aubin-Lions compacteness theorem again we have 
		$$ L^{p}  (0,T_{*}, W_{0}^{s,p}(\Omega))\cap W^{1,2}(0,T_{*},L^{2}(\Omega))\hookrightarrow^{c} L^{p}(0,T_{*}, W_{0}^{s-\eta,p}(\Omega)).$$
		Therefore, we obtain 
		\begin{equation}\label{z5}
		u_{m}\rightarrow u \;\text{stronly in }\; L^{p}(0,T_{*}, W_{0}^{s-\eta,p}(\Omega)),
		\end{equation}
		\begin{equation}\label{z6}
		u_{m}(t)\rightarrow u(t) \;\text{stronly in }\;  W_{0}^{s-\eta,p}(\Omega),\;\; a.e \;\text{in}\; (0,T_{*}).
		\end{equation}
		By $\eqref{z4}$, we have 
		\begin{equation}\label{z7}
		|u_{m}|^{p-2}u_{m}\log(|u_{m}|)\rightarrow |u|^{p-2}u\log(|u|),\;\; a.e\; (x,t)\in \Omega \times (0,T_{*}).
		\end{equation}
		A straightforward computation yields 
		\begin{eqnarray}\label{z8}
		\nonumber\int_{\Omega} \left| |u_{m}|^{p-2}u\log(|u_{m}|)\right|^{p'}\,dx&=&\int_{\Omega_{1}} \left| |u_{m}|^{p-2}u\log(|u_{m}|)\right|^{p'}\,dx+\int_{\Omega_{2}} \left| |u_{m}|^{p-2}u\log(|u_{m}|)\right|^{p'}\,dx\\\nonumber
		&\leq& e^{-p'}|\Omega|+C\int_{\Omega_{2}} |u_{m}(t)|^{q}\,dx\\
		&\leq & e^{-p'}|\Omega|+C_{1}[u_{m}(t)]^{p}_{s,p}\leq C_{T_{*}}
		\end{eqnarray}
		where $q\in [p,p^{*}_{s}]$, $p'=\frac{p}{p-1}$ and  $\Omega_{1}:=\{x\in \Omega,\; |u_{m}(x,t)|\leq 1\}$, $\Omega_{2}:=\{x\in \Omega,\; |u_{m}(x,t)|> 1\}$. Hence, by using [\cite{L}, Lemma 1.3, p. 12], it follows from $\eqref{z7}$  and $\eqref{z8}$ that 
		\begin{equation}\label{LOG}
		|u_{m}|^{p-2}u_{m}\log (|u_{m}|)\rightarrow |u|^{p-2}u\log (|u|), \;\; \text{weakly}^{*}\;\text{in}\; L^{\infty}(0,T_{*},L^{p'}(\Omega)).
		\end{equation}
		In the same way, one can show that  
			\begin{equation}\label{LOG2}
		|u_{m}|^{p-2}u_{m}\rightarrow |u|^{p-2}u, \;\; \text{weakly}^{*}\;\text{in}\; L^{\infty}(0,T_{*},L^{p'}(\Omega)).
		\end{equation}
	From $\eqref{z1}-\eqref{z2}$ and \cite[see, Lemma 3.1.7]{Zh}, 
		$$ u_{m}(0)\rightarrow u(0)\;\text{weakly}\;\text{in}\; L^{2}(\Omega). $$
		However, by (\ref{CVV}) we know that $u_m(0) \to u_0$ in $W_{0}^{s, p}(\Omega)$, in particular $u_m(0) \to u_0$ in $L^{2}(\Omega)$, and so, $u(0)=u_{0}$. This shows that $u$ satisfies the initial condition.\\
\end{description}
	It remains to pass to the limit in $\eqref{g1}$. For $\phi \in L^{2}(0,T_{*})$ and $\varphi_{i}\in V_{m}$, we have 
	\begin{eqnarray}\label{LIM}
	\nonumber\int_{0}^{T_{*}}\int_{\Omega}u_{mt}(t)\varphi_{i}\,dx\phi(t)\,dt+\int_{0}^{T_{*}}K^{s,p}(u_{m}(t),\varphi_{i})\phi(t)\,dt+\int_{0}^{T_{*}}\int_{\Omega} |u_{m}(t)|^{p-2}u_{m}(t)\varphi_{i}\,dx\phi(t)\,dt&=&\\
	=\int_{0}^{T_{*}}\int_{\Omega}|u_{m}(t)|^{p-2}u_{m}(t)\log(|u_{m}(t)|)\varphi_{i}\,dx\phi(t)\,dt.
	\end{eqnarray} 
Since $V_{m}\subset W_{0}^{s+\eta(p-1),p}(\Omega)$ (from the first step) then by using lemma \ref{lem1} together with  $\eqref{z6}$, we get 
	$$ K^{s,p}(u_{m}(t), \varphi_{i}) \rightarrow K^{s,p}(u(t),\varphi_{i})\;\;\text{a.e. in}\; (0,T_{*})$$
	Moreover, proposition \ref{prp1} and the boundedness of $\{u_{m}\}^{\infty}_{m=1}$ in $L^{\infty}(0,T_{*},W_{0}^{s,p}(\Omega))$ ensure that the sequence $\{K^{s,p}(u_{m}(t), \varphi_{i})\}^{\infty}_{m=1}$ is bounded in $L^{\infty}(0,T_{*})$. Therefore, the Lebesgue dominated convergence theorem implies 
	\begin{equation}\label{LOG3}
	\int_{0}^{T_{*}}K^{s,p}(u_{m}(t),\varphi_{i})\phi(t)\,dt\rightarrow \int_{0}^{T_{*}}K^{s,p}(u(t),\varphi_{i})\phi(t)\,dt\;\text{as}\; m\rightarrow\infty.
	\end{equation}
	Letting $m\rightarrow\infty$ in $\eqref{LIM}$ and using $\eqref{z2}$, $\eqref{LOG}$, $\eqref{LOG2}$, $\eqref{LOG3}$ we deduce 
	$$ \int_{0}^{T_{*}}\int_{\Omega}u_{t}(t)\varphi_{i}\,dx\phi(t)\,dt+\int_{0}^{T_{*}}K^{s,p}(u(t),\varphi_{i})\phi(t)\,dt+\int_{0}^{T_{*}}\int_{\Omega} |u(t)|^{p-2}u(t)\varphi_{i}\,dx\phi(t)\,dt=$$
	$$=\int_{0}^{T_{*}}\int_{\Omega}|u(t)|^{p-2}u(t)\log(|u(t)|)\varphi_{i}\,dx\phi(t)\,dt$$
	Thus 
	$$ \int_{\Omega}u_{t}(t)\varphi_{i}\,dx+K^{s,p}(u(t),\varphi_{i})+\int_{\Omega} |u(t)|^{p-2}u(t)\varphi_{i}\,dx=
	\int_{\Omega}|u(t)|^{p-2}u(t)\log(|u(t)|)\varphi_{i}\,dx,\;\text{a.e. in}\; (0,T_{*}),$$
	Using the density of $V_{m}$ in $W_{0}^{s,p}(\Omega)$ we obtain
	$$ \int_{\Omega}u_{t}(t)v\,dx+K^{s,p}(u(t),v)+\int_{\Omega} |u(t)|^{p-2}u(t)v\,dx=
	\int_{\Omega}|u(t)|^{p-2}u(t)\log(|u(t)|)v\,dx,\;\text{a.e. in}\; (0,T_{*}),\;\forall v\in W_{0}^{s,p}(\Omega).$$
	We now show that the solution $u$ satisfies the energy inequality $\eqref{en1}$. To do this end, let $\theta$ be the nonnegative function which belongs to $C([0,T_{*}])$. From $\eqref{y1}$ we have  
	\begin{equation}\label{en2}
	\int_{0}^{T_{*}}\theta(t)\,dt\int_{0}^{T_{*}}\|u_{ms}(s)\|_{2}^{2}\,ds+\int_{0}^{T_{*}}E(u_{m}(t))\theta(t)\,dt=\int_{0}^{T_{*}}E(u_{m}(0))\theta(t)\,dt 
	\end{equation}
	The right-hand side of $\eqref{en2}$ converges to 
	$$ \int_{0}^{T_{*}}E(u_{0})\theta(t)\,dt$$
	as $m\rightarrow \infty$. The second term in the left-hand side $\int_{0}^{T}E(u_{m}(t))\theta(t)\,dt$ is lower semicontinuous with respect to the weak topology of $W_{0}^{s,p}(\Omega)$. Hence 
	
	\begin{equation}\label{est}
	\int_{0}^{T_{*}}E(u(t))\theta(t)\,dt\leq \liminf_{m\rightarrow \infty} \int_{0}^{T_{*}}E(u_{m}(t))\theta(t)\,dt 
	\end{equation}
	Therefore, we obtain  
	$$ \int_{0}^{T_{*}}\theta(t)\,dt\int_{0}^{t}\|u_{s}(s)\|^{2}_{2}\,ds+ \int_{0}^{T_{*}}E(u(t))\theta(t)\,dt\leq  \int_{0}^{T_{*}}E(u_{0})\theta(t)\,dt$$
	Since $\theta$ was arbitrarily chosen we conclude 
	$$ \int_{0}^{t}\|u_{s}(s)\|^{2}_{2}\,ds+ E(u(t))\leq  E(u_{0}),\;\;\;a.e.\, t\in [0,T_{*}].$$
Thus, this completes the proof of theorem \ref{th1}.
\section{Proof of Theorem \ref{th3}}
In this section, by using the potential well theory combined with the \textit{Nehari} manifold, we prove that the local weak solutions of problem $\eqref{eq1}$ exist globally, see (\cite{CT}, \cite{FU}) and the references therein for some results on global existence of solutions. Furthermore, we show that the norm $\|u(t)\|_{2}$ decays polynomially. For this purpose, we need to recall the following Lemma due to Martinez \cite{M}.
\begin{lemma}\label{le4}
	Let $f :\mathbb{R}^{+}\rightarrow\mathbb{R}^{+}$ be a nonincreasing function and $\sigma $ is a nonnegative constant such that 
	$$ \int_{t}^{+\infty}f^{1+\sigma}(s)\,ds\leq \frac{1}{\omega}f^{\sigma}(0)f(t), \;\; \forall t\geq 0.$$
	then we have 
	\begin{description}
		\item[1] $f(t)\leq f(0)e^{1-\omega t}$, for all $t\geq 0$, whenever $\sigma =0$.
		\item [2] $f(t)\leq f(0)\left(\frac{1+\sigma}{1+\omega \sigma t}\right)^{1/\sigma}$, for all $t\geq0$, whenever $\sigma > 0$.
	\end{description}
\end{lemma}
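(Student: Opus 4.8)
The plan is to reduce the whole estimate to a scalar dissipation inequality for the tail functional $\phi(t):=\int_{t}^{+\infty}f^{1+\sigma}(s)\,ds$, solve that inequality explicitly, and then recover the pointwise decay of $f$ from the decay of $\phi$ by exploiting the monotonicity of $f$. Since $f$ is nonincreasing it is measurable and bounded by $f(0)$, and the hypothesis at $t=0$ forces $\phi(0)\le\frac1\omega f^{1+\sigma}(0)<+\infty$; hence $\phi$ is well defined, nonnegative, nonincreasing, absolutely continuous, with $\phi'(t)=-f^{1+\sigma}(t)$ for a.e.\ $t\ge0$, and the hypothesis reads $\phi(t)\le\frac1\omega f^{\sigma}(0)f(t)$ for all $t\ge0$. (If $\phi(t_{0})=0$ for some $t_{0}$ then $f=0$ a.e.\ on $(t_{0},+\infty)$ and the conclusion is trivial there, so one may assume $\phi>0$ throughout.)

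\emph{The case $\sigma=0$.} Here the hypothesis becomes $\phi(t)\le\frac1\omega f(t)=-\frac1\omega\phi'(t)$, i.e.\ $\bigl(e^{\omega t}\phi(t)\bigr)'\le0$, so $\phi(t)\le\phi(0)e^{-\omega t}\le\frac1\omega f(0)e^{-\omega t}$. Then I would ``de-integrate'': for $0<T\le t$, monotonicity gives $Tf(t)\le\int_{t-T}^{t}f(s)\,ds\le\phi(t-T)\le\frac1\omega f(0)e^{-\omega(t-T)}$, hence $f(t)\le\frac{e^{\omega T}}{\omega T}f(0)e^{-\omega t}$; the factor $e^{\omega T}/(\omega T)$ is minimal at $T=1/\omega$ with value $e$, which is admissible exactly when $t\ge1/\omega$, giving $f(t)\le f(0)e^{1-\omega t}$. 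For $0\le t<1/\omega$ one has $e^{1-\omega t}>1\ge f(t)/f(0)$, so the bound holds for every $t\ge0$.

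\emph{The case $\sigma>0$.} From $f(t)\ge\frac{\omega}{f^{\sigma}(0)}\phi(t)$ and $\phi'(t)=-f^{1+\sigma}(t)$ I get $-\phi'(t)\ge\bigl(\tfrac{\omega}{f^{\sigma}(0)}\bigr)^{1+\sigma}\phi(t)^{1+\sigma}$, that is $\frac{d}{dt}\phi(t)^{-\sigma}\ge\sigma\bigl(\tfrac{\omega}{f^{\sigma}(0)}\bigr)^{1+\sigma}$. Integrating on $[0,t]$ and inserting $\phi(0)^{-\sigma}\ge\omega^{\sigma}f(0)^{-\sigma(1+\sigma)}$, the right-hand side factors as $\omega^{\sigma}f(0)^{-\sigma(1+\sigma)}(1+\omega\sigma t)$, so $\phi(t)\le\frac1\omega f(0)^{1+\sigma}(1+\omega\sigma t)^{-1/\sigma}$. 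To recover $f$, observe that for $0<h\le t$ monotonicity yields $h\,f(t)^{1+\sigma}\le\int_{t-h}^{t}f^{1+\sigma}(s)\,ds\le\phi(t-h)$, whence $f(t)^{1+\sigma}\le\frac{1}{\omega h}f(0)^{1+\sigma}\bigl(1+\omega\sigma(t-h)\bigr)^{-1/\sigma}$. I would then choose the window $h=\frac{1+\omega\sigma t}{\omega(1+\sigma)}$ (so $h\le t$ precisely when $t\ge1/\omega$), which makes $1+\omega\sigma(t-h)=\frac{1+\omega\sigma t}{1+\sigma}$ and $\frac{1}{\omega h}=\frac{1+\sigma}{1+\omega\sigma t}$, and this collapses the right-hand side to $f(0)^{1+\sigma}\bigl(\frac{1+\sigma}{1+\omega\sigma t}\bigr)^{(1+\sigma)/\sigma}$, i.e.\ $f(t)\le f(0)\bigl(\frac{1+\sigma}{1+\omega\sigma t}\bigr)^{1/\sigma}$ for $t\ge1/\omega$; for $0\le t<1/\omega$ the right-hand side already exceeds $f(0)\ge f(t)$, so the estimate holds for all $t\ge0$.

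The only genuinely delicate point is the bookkeeping of constants so that the ``de-integration'' step reproduces the sharp numerical factors $e$ (for $\sigma=0$) and $1+\sigma$ (for $\sigma>0$) — everything else is the classical scheme of integrating a dissipation inequality and then undoing the integration via monotonicity of $f$. Since this is exactly the lemma of Martinez, one may alternatively simply quote \cite{M}.
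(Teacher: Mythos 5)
Your proof is correct. Note, however, that the paper does not prove this lemma at all: it is stated as a recalled result and simply attributed to Martinez \cite{M}, so your closing remark (``one may alternatively simply quote \cite{M}'') is precisely what the author does. Your argument is a faithful, self-contained reconstruction of the classical Martinez/Komornik scheme: pass to the tail $\phi(t)=\int_t^{+\infty}f^{1+\sigma}(s)\,ds$, which is absolutely continuous with $\phi'=-f^{1+\sigma}$ a.e., convert the hypothesis into the differential inequalities $(e^{\omega t}\phi)'\le 0$ (case $\sigma=0$) and $(\phi^{-\sigma})'\ge\sigma\,\omega^{1+\sigma}f(0)^{-\sigma(1+\sigma)}$ (case $\sigma>0$), integrate, and then recover the pointwise bound on $f$ from the decay of $\phi$ by averaging over a window $[t-h,t]$ and optimizing $h$. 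I checked the bookkeeping: the choices $h=1/\omega$ and $h=\frac{1+\omega\sigma t}{\omega(1+\sigma)}$ do reproduce the constants $e$ and $1+\sigma$, the admissibility restriction $h\le t$ amounts to $t\ge 1/\omega$, and for $t<1/\omega$ both claimed bounds exceed $f(0)\ge f(t)$, so the estimates hold for all $t\ge 0$. The only implicit assumptions are the harmless ones $\omega>0$ and $f(0)>0$ (otherwise $f\equiv 0$ and everything is trivial), and your handling of a possible zero of $\phi$ is adequate: beyond the first such point $f$ vanishes, and at that point itself monotonicity of $f$ together with continuity of the right-hand side closes the gap. So the lemma could be proved in place exactly as you do, or cited as in the paper; either is legitimate, and your version has the advantage of making the decay constants explicit and self-contained.
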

Notice that, by the assumption that 
$ u_{0}\in \mathcal{W}$ we obtain  
$$
E(u_{0}) =\frac{1}{p}I(u_{0})+\frac{1}{p^{2}}\|u_{0}\|^{p}_{p} >0.
$$
From $\eqref{y1}$ we have 
\begin{equation}\label{EZQ}
	\int_{0}^{t}\|u_{ms}(s)\|_{2}^{2}\,dx+E(u_{m}(t))=E(u_{m}(0)), \; 0\leq t< T_{m}, 
	\end{equation}
	where $T_{m}$ is the maximal time of existence of solution $u_{m}(x,t)$. Since  $u_{0m}$ converges to $u_{0}$ strongly in $W_{0}^{s, p}(\Omega)$, the continuity of $E$ ensures that   
	$$ E(u_{m}(0))\rightarrow E(u_{0}),\;\text{as}\; m\rightarrow +\infty. $$
	From the assumption that $E(u_{0})< d$,  we have $E(u_{0m})< d$, for sufficiently large $m$. This combined with $\eqref{EZQ}$ implies that 
	\begin{equation}\label{ETT}
	\int_{0}^{t}\|u_{ms}(s)\|_{2}^{2}\,dx+E(u_{m}(t))<d, \; 0\leq t< T_{m}, 
	\end{equation}
for sufficiently latge $m$. We will show that $T_{m}=+\infty$ and 
\begin{equation}\label{v12}
u_{m}(t)\in \mathcal{W},\;\;\forall t\geq 0, 
\end{equation}
for sufficiently large $m.$ Suppose by contradiction that $u_{m}(t_1) \notin \mathcal{W}$ for some $t_1 \in[0,T_{m})$. Let $t_{*} \in [0,T_{m})$ be the smallest time for which $u_{m}(t_{*})\notin \mathcal{W}$. Then, by continuity of $u_{m}(t)$, we get $u_{m}(t_{*})\in \partial \mathcal{W}$. Hence, it turns out that
	\begin{equation}\label{v4}
	E(u_{m}(t_{*}))=d.
	\end{equation} 
	or 
	\begin{equation}\label{v2}
	I(u_{m}(t_{*}))=0.
	\end{equation} 
	It is clear that $\eqref{v4}$ could not occur by $\eqref{ETT}$ while if $\eqref{v2}$ holds then, by the definition of $d$, we have 
	$$ E(u_{m}(t_{*}))\geq \inf_{u\in \mathcal{N}}E(u)=d, $$
	which also a contradiction with $\eqref{ETT}$. Consequently, $\eqref{v12}$ is hold.\\
	On other hand, since $u_{m} (t)\in \mathcal{W}$ and 
	$$ E(u_{m}(t))=\frac{1}{p}I(u_{m}(t))+\frac{1}{p^{2}}\|u_{m}(t)\|_{p}^{p},\;\;\forall t\in [0, T_{m}), $$
	we deduce that 
	\begin{equation}\label{i2}
	\|u_{m}(t)\|_{p}^{p}<  dp^{2}\;\;\text{and}\;\;\; \int_{0}^{t}\|u_{ms}(s)\|^{2}_{2}\,ds< d,
	\end{equation}
	for sufficiently large $m$ and $t\in [0,T_{m}).$ Further, by using $\eqref{eg11}$ and for any $\varepsilon \in (0,1)$ we have 
	\begin{eqnarray*}
		[u_{m}(t)]_{s,p}^{p}&=&p  E(u_{m}(t))-\|u_{m}(t)\|_{p}^{p}+\int_{\Omega} |u_{m}(t)|^{p}\log(|u_{m}(t)|)\,dx-\frac{1}{p}\|u_{m}(t)\|_{p}^{p}\\
		&\leq & p  E(u_{m}(t))+\int_{\Omega} |u_{m}(t)|^{p}\log(|u_{m}(t)|)\,dx\\
		&\leq & pd+\varepsilon [u_{m}(t)]^{p}_{s,p}+C(\varepsilon)(dp^{2})^{\gamma}
	\end{eqnarray*}
Hence
	\begin{equation}\label{i1}
	[u_{m}(t)]_{s,p}^{p}\leq C_{d},\;  \forall t\in [0,T_{m}).
	\end{equation}
	The above estimates allow us to take $T_{m}=+\infty$ for all $m$.
Now using $\eqref{i2}$ and $\eqref{i1}$, the existence of global solutions follow as in the first section.

In order to prove that the norm $\|u(t)\|_{2}$ decays polynomially, we need to assume the following condition  
$$E(u_{0})< M,$$
where $M$ was introduced in Lemma \ref{le3}. Combining $\eqref{u1}$ with the fact that $u_{m}(t)\in \mathcal{W}$ we deduce 
	\begin{equation}
	\|u_{m}(s)\|_{p}^{p}\leq p^{2}E(u_{m}(t))\leq p^{2}E(u_{0m}).
	\end{equation}
By $\eqref{eg11}$ for any $\varepsilon \in (0,1),$
	\begin{eqnarray*}
		I(u_{m}(t))&=& [u_{m}(t)]^{p}_{s,p}+\|u_{m}(t)\|_{p}^{p}-\int_{\Omega} |u_{m}(t)|^{p}\log(|u_{m}(t))\,dx\\
		&\geq & \left( 1-\varepsilon\right)[u_{m}(t)]^{p}_{s,p}+\|u_{m}(t)\|_{p}^{p}\left( 1-C(\varepsilon)(\|u_{m}(t)\|_{p}^{p})^{\gamma-1}\right)
	\end{eqnarray*} 
Taking $\varepsilon =\frac{1}{2}$, we get 
	\begin{eqnarray*}\label{o1}
	I(u_{m}(t))\geq \left(1-C\left(\frac{1}{2}\right)\left(p^{2}E(u_{0m})\right)^{\gamma-1}\right)\|u_{m}(t)\|_{p}^{p}&\geq& |\Omega|^{2-p}\left(1-C\left(\frac{1}{2}\right)\left(p^{2}E(u_{0m})\right)^{\gamma-1}\right)\|u_{m}(t)\|_{2}^{p}\\
	&=&\kappa_{m} \|u_{m}(t)\|_{2}^{p}
	\end{eqnarray*}
	
	where $\kappa_{m} =|\Omega|^{2-p}\left(1-C\left(\frac{1}{2}\right)\left(p^{2}E(u_{0m})\right)^{\gamma-1}\right)> 0$. On other hand, multiplying both sides of $\eqref{g1}$ by $g_{im}(t)$, take the sum over $i\in \{1,2,\ldots,m\}$, and afterwards integrate over $(t,T)$ yields 
	\begin{equation*}\label{o2}
	\int_{t}^{T}I(u_{m}(s)\,ds=-\int_{t}^{T}\int_{\Omega} u_{sm}(s)u_{m}(s)\,dxds\leq \frac{1}{2}\|u_{m}(t)\|_{2}^{2},\; 	\forall t\in [0,T].
	\end{equation*}
The last inequality combined with $\eqref{o1}$ yields 
\begin{equation}\label{EST}
	 \int_{t}^{T}\|u_{m}(t)\|_{2}^{p}\,ds\leq \frac{1}{2\kappa_{m}}\|u_{m}(t)\|_{2}^{2}, \;\;\forall t\in [0,T].
\end{equation}
Since $E(u_{0m})\rightarrow E(u_{0})$ as $m\rightarrow\infty$, it follows that $\kappa_{m}\rightarrow \kappa $ as $m\rightarrow\infty$. On the other hand, from $\eqref{i2}$ and $\eqref{i1}$ for each $T>0$ we have $\{u_{m}\}^{\infty}_{m=1}\subset L^{\infty}(0,T, W_{0}^{s, p}(\Omega))$ and $\{u_{mt}\}^{\infty}_{m=1}\subset L^{2}(0,T,L^{2}(\Omega))$. Using Aubin-Lions theorem we conclude 
	$$ \|u_{m}(t)\|_{2}\rightarrow \|u(t)\|_{2}\;\;\text{as}\; m\rightarrow \infty,\; \forall t\in[0,T].$$
Letting $m\rightarrow \infty$ in $\eqref{EST}$ we get 
	\begin{equation}\label{ESTT}
	\int_{t}^{T}\|u(t)\|_{2}^{p}\,ds\leq \frac{1}{2\kappa}\|u(t)\|_{2}^{2}, \;\;\forall t\in [0,T].
	\end{equation}
Now, letting $T\rightarrow +\infty$ and using Lemma \ref{le4}, we obtain
	$$ \|u(t)\|_{2}\leq \|u_{0}\|_{2}\left( \frac{p}{2(1+\kappa (p-2)\|u_{0}\|^{p-2}_{2}t)}\right)^{1/(p-2)}, \;\; t\geq0.$$
	The proof is now complete.
\section{proof of theorem \ref{th4}}
In this section, by means of a differential inequality technique,  we prove that the local solutions of problem $\eqref{eq1}$ blow-up in finite time. 

First, we observe that by \cite[Theorem 2.5.5, p.54]{Zh} the local weak solution $u$ which obtained in section $1$ can be extended to a maximal weak solution in $[0, T_{\max})$. Thus, the energy inquality $\eqref{ENR}$ can be obtained by extending $\eqref{en1}$ to $[0, T_{\max})$. 
Now, we claim that  
	\begin{equation}\label{EZA}
	\text{if}\; u_{0}\in Z\;\;\text{then }\;\; u(t)\in Z,\;\;\forall t\in [0, T_{\max}).
	\end{equation}
	Indeed, arguing as in the proof of theorem \ref{th3}, we get  
	\begin{equation}\label{Sv1}
	u_{m}(t)\in Z,\;\;\forall t\in [0,T_{\max}), 
	\end{equation}
	for sufficiently large $m.$ Using $\eqref{z1}$, $\eqref{z2}$, $\eqref{LOG}$ and the same argument used to obtain $\eqref{en1}$ we conclude 
	$$u(t)\in Z,\;\;\forall t\in [0, T_{\max}).$$
Now we consider the following functional
	\begin{equation}\label{rtt}
	E(t)=\int_{0}^{t}\|u(s)\|_{2}^{2}\,ds+(T-t)\|u_{0}\|^{2}_{2},\;\; t\in [0,T_{\max}).
	\end{equation} 
	By differentiating $E(t) $, we obtain 
	\begin{equation}
	E'(t)=\|u(t)\|_{2}^{2}-\|u_{0}\|^{2}_{2}=\int_{0}^{t}\frac{d}{ds}\left(\|u(s)\|_{2}^{2}\right)\,ds=2\int_{0}^{t}\int_{\Omega} u_{s}(s)u(s)\,dxds
	\end{equation} 
	and 
	\begin{equation}\label{ert1}
	E''(t)=2\int_{\Omega} u_{t}(t)u(t)\,dxdt=-[u(t)]_{s,p}^{p}-\|u(t)\|_{p}^{p}+\int_{\Omega} |u(t)|^{p}\log(|u(t)|)\,dx=-I(u(t))
	\end{equation} 
	It follows from $\eqref{EZA}$ that 
	$$ E''(t) > 0,\;\text{for all}\; t\in [0, T_{\max}).$$ 
	Since $I(u(t))=pE(u(t))-\frac{1}{p}\|u(t)\|^{p}_{p}$ and by using $\eqref{ENR}$ we have 
	\begin{equation}
	E''(t)=-pE(u(t))+\frac{1}{p^{2}}\|u(t)\|_{p}^{p}\geq p\int_{0}^{t}\|u_{s}(s)\|_{2}^{2}\,ds+\frac{1}{p^{2}}\|u(t)\|_{p}^{p}-pE(u_{0})
	\end{equation}
	This implies 
	\begin{equation}
	E''(t)\geq \frac{1}{p^{2}}\|u(t)\|_{p}^{p}\geq |\Omega|^{2-p}\frac{1}{p}\|u(t)\|_{2}^{p}=|\Omega|^{2-p}\frac{1}{p}\left(E'(t)+\|u_{0}\|_{2}^{2}\right)^{p/2}.
	\end{equation}
	Set $\phi(t)=E'(t)+\|u_{0}\|_{2}^{2}$, we conclude that 
	$$ \phi'(t)\geq \frac{|\Omega|^{2-p}}{p} (\phi(t))^{p/2}.$$
	From standard differential inequality, it turns out that 
	$$ \|u(t)\|_{2}^{2}\geq \left( \frac{1}{\|u_{0}\|_{2}^{2-p}-Ct}\right)^{2/(p-2)}$$
	where $C=p^{-1}|\Omega|^{(p-2)}(p-2)$. Therefore 
	$$ \lim_{t\rightarrow T^{-}_{\max}}\|u(t)\|^{2}_{2}=+\infty,\;\; \text{wher}\; T_{\max}=\frac{\|u_{0}\|_{2}^{2-p}}{C}.$$
The proof is now complete.\vspace{0.5cm}\\

\section*{Acknowledgment.} The author warmly thank the anonymous referee for his/her useful and nice comments on the paper.

\end{document}